\newcommand{\QFOM}{{$\Q_r$-$\mathrm{IRGNM}$}}
\newcommand{\FOM}{{$\mathrm{FOM}$-$\mathrm{IRGNM}$}}
\newcommand{\QTRRB}{{$\Q_r$-$V_r$-$\mathrm{IRGNM}$}}
\newcommand{\TRradius}{\eta}
\newcommand{\Qcoeff}{\mathsf{q}}
\DeclareMathOperator*{\argmin}{arg\,min}
\newcommand{\R}{\mathbb R}
\newcommand{\N}{\mathbb N}
\newcommand{\bx}{{\bm x}}
\newcommand{\yd}{{y^{\delta}}}
\newcommand{\ubar}[1]{\underaccent{\bar}{#1}}
\newcommand{\qtrial}{{q_{\mbox{\tiny{trial}}}}}
\newcommand{\qa}{{q_{\mathsf a}}}
\newcommand{\qagc}{q_{\mbox{\tiny{AGC}}}}
\renewcommand{\H}{{\mathscr H}}
\newcommand{\Q}{{\mathscr Q}}
\newcommand{\Qad}{{\mathscr Q_\mathsf{ad}}}
\newcommand{\CQ}{{\mathscr C}}
\newcommand{\Sol}{\mathcal S}
\newcommand{\C}{{\mathcal C}}
\newcommand{\F}{{\mathcal F}}
\DeclareMathOperator*{\essinf}{ess\,inf}
\newcommand{\ccont}{ c_{\mbox{\tiny cont}}}
\newcommand{\coercivityconstant}{\ubar{a}}
\newcommand{\ulin}{du}
\newcommand{\plin}{dp}
\definecolor{DarkRed}{rgb}{0.5 0 0}
\definecolor{DarkBlue}{rgb}{0 0 0.5}
\definecolor{DarkGreen}{rgb}{0 0.5 0}
\definecolor{DarkOrange}{rgb}{0.83 0.33 0}
\definecolor{DarkMagenta}{rgb}{0.83 0 0.67}
\newcommand{\ReA}[1]{{#1}}
\newcommand{\ReB}[1]{{#1}}
\newtheoremstyle{Stil}{}{}{\itshape}{}{\bfseries}{.\\}{1ex}{}
\newtheoremstyle{Stil2}{}{}{\rmfamily}{}{\bfseries}{.}{1ex}{}
\theoremstyle{Stil}
\newtheorem{theorem}{Theorem}[section]
\newtheorem{proposition}[theorem]{Proposition}
\theoremstyle{plain}
\newtheorem{remark}[theorem]{Remark}
\newtheorem{assumption}[theorem]{Assumption}
\theoremstyle{Stil2}
\newtheorem{example}[theorem]{Example}
\crefname{assumption}{\textup{Assumption}}{\textup{Assumptions}}
\crefname{lemma}{\textup{Lemma}}{\textup{Lemmas}}
\crefname{theorem}{\textup{Theorem}}{\textup{Theorems}}
\crefname{remark}{\textup{Remark}}{\textup{Remarks}}
\crefname{example}{\textup{Example}}{\textup{Examples}}
\crefname{corollary}{\textup{Corollary}}{\textup{Corollaries}}
\crefname{subsection}{\textup{Section}}{\textup{Subsections}}
\crefname{section}{\textup{Section}}{\textup{Sections}}
\crefname{figure}{\textup{Figure}}{\textup{Figures}}
\crefname{table}{\textup{Table}}{\textup{Tables}}
\begin{document}

\title[Article Title]{Adaptive Reduced Basis Trust Region Methods for Parameter Identification Problems}


\author*[1]{\fnm{Michael} \sur{Kartmann}}\email{michael.kartmann@uni-konstanz.de}

\author[2]{\fnm{Tim} \sur{Keil}}\email{tim.keil@uni-muenster.de}

\author[2]{\fnm{Mario} \sur{Ohlberger}}\email{mario.ohlberger@uni-muenster.de}%

\author[1]{\fnm{Stefan} \sur{Volkwein}}\email{stefan.volkwein@uni-konstanz.de}

\author[3]{\fnm{Barbara} \sur{Kaltenbacher}}\email{barbara.kaltenbacher@aau.at}

\affil*[1]{\orgdiv{Department of Mathematics and Statistics}, \orgname{University of Konstanz}, \orgaddress{\street{Universit\"atsstr. 10}, \city{Konstanz}, \postcode{78464},  \country{Germany}}}

\affil[2]{\orgdiv{Applied Mathematics: Institute for Analysis and Numerics}, \orgname{University of M\"unster}, \orgaddress{\street{Einsteinstr. 62}, \city{M\"unster}, \postcode{48149},  \country{Germany}}}

\affil[3]{\orgdiv{Department of Mathematics}, \orgname{Alpen-Adria-Universit\"at Klagenfurt}, \orgaddress{\street{Universit\"atsstr. 65-67}, \city{Klagenfurt}, \postcode{9020},  \country{Austria}}}


\abstract{
	In this contribution, we are concerned with model order reduction in the context of iterative regularization methods for the solution of inverse problems arising from parameter identification in elliptic partial differential equations. Such methods typically require a large number of forward solutions, which makes the use of the reduced basis method attractive to reduce computational complexity. 
	However, the considered inverse problems are typically ill-posed due to their infinite-dimensional parameter space. Moreover, the infinite-dimensional parameter space makes it impossible to build and certify classical reduced-order models efficiently in a so-called "offline phase". We thus propose a new algorithm that adaptively builds a reduced parameter space in the online phase. The enrichment of the reduced parameter space is naturally inherited from the Tikhonov regularization within an iteratively regularized Gau{\ss}-Newton method.
	Finally, the adaptive parameter space reduction is combined with a certified reduced basis state space reduction within an adaptive error-aware trust region framework. Numerical experiments are presented to show the efficiency of the combined parameter and state space reduction for
	inverse parameter identification problems with distributed reaction or diffusion
	coefficients.
	}

\keywords{parameter identification, reduced basis method, model reduction, inverse problems}



\maketitle

\section{Introduction}\label{section: introduction}

To perform a forward solution of a partial differential equation (PDE), all involved coefficients and parameters must be known. If some parameters cannot be measured directly, they have to be determined from indirect measurements instead. This procedure is called the \emph{inverse problem} or parameter identification in PDEs (cf., e.g., \cite{isakov2017inverse,Kavian+2003+125+162}).
Often these problems are ill-posed in the sense that the reconstructed parameter does not depend continuously on the measurements. To overcome this lack of stability, regularization methods are used to approximate the desired parameter in a stable way \cite{BakKok04,EHNbuch96,KaltenbacherNeubauerScherzer+2008,Kirs96,Voge02}. Regularization can be achieved by different strategies such as Tikhonov, Morozov, or Ivanov regularization or, e.g., by the concept of regularization by discretization. Classical iterative regularization methods are for example the Landweber method, the Levenberg-Marquardt method, or the iteratively regularized Gau{\ss}-Newton method (IRGNM), which we will consider in this paper and which has been extensively studied in different settings \cite{Kaltenbacher_2009, KaltenbacherNeubauerScherzer+2008}. Typically, these methods need a high number of forward solutions of the respective PDE. To improve the efficiency of the algorithm, adaptive finite element approximations were used in \cite{KaltenbacherMain, diss_kirchner}.

Another possibility to reduce the computational complexity is the use of model order reduction (MOR) techniques \cite{bennerohlberger, quarteroni2015reduced}, which have been applied to PDE-constrained optimization \cite{Hinzeetal2009,BennerSachsVolkwein2014,BoncoraglioFarhat2022} and  inverse problems \cite{article, RBL,0061f27858a54ce4bfae8fc2a2015fdc,GhattasWillcox2021}. A particular MOR method is the reduced basis (RB) method \cite{book, Haasdonk2017Chapter2R}, where the full-order model (FOM) is projected onto a low-dimensional space spanned by a reduced basis consisting of snapshots, i.e. of the solution of the PDE corresponding to meaningful parameters. The traditional way of applying the reduced basis method is a so-called offline-online decomposition, where the reduced-order model (ROM) is constructed by a goal-oriented greedy algorithm in the offline stage. In the online stage, the ROM and an a posteriori error estimator are available to perform cheap, but certified computations. The efficiency of the greedy algorithm depends heavily on the parameterization and is restricted to low-dimensional and bounded parameter spaces. Adaptive reduced basis methods for PDE-constrained optimization overcome this behavior partially \cite{YueMeerbergen2013,QGVW17,Keil2021nonco-54293, keil2022adaptive,Banholzer2022-06Trust-57762,APV23} by constructing a small problem-dependent basis along the optimization path. In the context of inverse problems, the authors of \cite{RBL} introduced the RB Landweber algorithm which uses adaptive reduced basis approximations to solve an inverse problem without the theoretical need for the parameter space to be bounded. In their numerical experiments, they tackled parameter spaces of dimension $900$.
Nevertheless, the RB reduction becomes infeasible if the parameter space gets high-dimensional (e.g., as high-dimensional as the underlying FOM approximation) because (i) the projection of all affine components is costly and (ii) the assembly of residual-based error estimators is prohibitively costly since per parameter and per RB element one Riesz-representative needs to be computed, which involves the solution of a system of FOM complexity.

In the literature, there are parameter reduction strategies, such as active subspace methods \cite{Constantin2015}, used by the authors in \cite{Tezzele2018,inproceedings}. In their numerical experiments, the parameter set was of dimension $10$. We also refer to \cite{Lametal2020} for an extension of the active subspace method to the multifidelity case.
In inverse problems, however, the ill-posedness only occurs if the parameter space is infinite-dimensional, i.e. in a discrete setting if the parameter space can be of arbitrary dimension. Correspondingly there is a need for adaptive RB methods that can efficiently deal with a parameter space of arbitrary dimension. 
To this end combined state and parameter reduction methods that are particularly tailored towards the solution of inverse problems are needed. In the context of Bayesian inverse problems such approaches have been proposed e.g. in  \cite{Liebermanetal2016,HimpeOhlberger2014}. We also refer to \cite{HimpeOhlberger2015} for an approach based on cross-gramians in the context of large-scale control problems.

As the main contribution of this paper, we propose an adaptive reduced basis algorithm that can deal with an infinite-dimensional parameter space in an efficient manner and serves as an iterative regularization algorithm for the parameter identification problem. The key ingredient is to reduce the dimension of the parameter space by adaptively constructing a reduced basis for it consisting of the FOM gradients of the discrepancy functional. We show that this is a natural choice that is inherited from the Tikhonov regularization. Since the number of affine components on the reduced parameter space is low-dimensional, the RB method for state space reduction can be applied efficiently. Consequently, our novel approach is a certified, adaptive, error-aware trust region setting to significantly speed up the solution process of the parameter identification problem by parameter- and state space model order reduction.

The article is organized as follows. In Section~\ref{sec:2}, we introduce the parameter identification problem and present the IRGNM. In Section~\ref{section: TRRB}, the adaptive parameter and model reduction approach is described. First, we propose the IRGNM with adaptive parameter space reduction in Section~\ref{sec: QFOM}. On top of that, the classical RB model reduction is applied in Section~\ref{sec: QRB}. In Section~\ref{sec: TRIRGNM}, we present the combined parameter- and state space reduced error-aware trust region iteratively regularized Gau{\ss}-Newton method. In Section~\ref{section: numerical results}, all proposed algorithms are compared in terms of computation time and the quality of the reconstruction, and finally, in Section~\ref{Sec:5} some concluding remarks and an outlook are given.

\section{Parameter identification for elliptic PDEs}
\label{sec:2}
\setcounter{equation}{0}
%
\subsection{Problem formulation}
\label{sec:2.1}

Let $\Q$, $\H$, $V$ be (real) Hilbert spaces and $\Qad\subset\Q$ be a closed, convex set, where the subscript `$\mathsf{ad}$' stands for admissible parameters. We are interested in identifying space-dependent parameters $q\in \Qad$ from noisy measurements $\yd\in\H$ of the exact solution $u\in V$ solving the linear, elliptic partial differential equation (PDE)
\begin{equation}
	\label{eq: VarState}
	a(u,v;q) =\ell(v) \quad \text{for all }v \in V
\end{equation}
with a $q$-dependent bilinear form $a(\cdot\,,\cdot\,;q):V\times V\to \R$ and linear form~$\ell\in V'$. Furthermore, we assume that the exact measurements $y \in \H$ are obtained from the solution $u \in V$ through a linear bounded (observation) operator $\C:V\to\H$, i.e. $y = \C u$.

\begin{assumption}
	\label{A1}
	The bilinear form $a(\cdot\,,\cdot\,;q)$ is assumed to be continuous and coercive for any $q\in\Qad$. Further, for any $u,v\in V$ the  map $q\mapsto a(u,v;q)$ is linear.
\end{assumption}

\begin{remark}
	From the Lax-Milgram theorem (cf., e.g., \emph{\cite[pp.~315-316]{Eva10}}) it is well-known that Assumption~{\em\ref{A1}} implies the existence of a unique solution $u=u(q)\in V$ to the forward problem \eqref{eq: VarState} for any $q\in\Qad$. We can introduce the solution operator $\Sol:\Qad\to V$, where $u(q)=\Sol(q)$. Notice that \eqref{eq: VarState} is a linear problem for any $q\in\Qad$, but $\Sol$ is usually a non-linear operator.
\end{remark}

Next, we express the parameter identification problem as an operator equation by introducing the forward operator 
\begin{align*}
	\F:\Qad\to\H\quad\text{with }\F=\mathcal C\circ\mathcal S.
\end{align*}
The parameter identification problem reads as follows: find a parameter $q^\mathsf e\in\Qad$ such that for a given exact data $y^\mathsf e=\C u^\mathsf e\in \text{range}(\F)$ with $u^\mathsf e=\Sol(q^\mathsf e)$ the following equation holds:
\begin{align}
	\label{eq: IP}
	\tag{$\mathbf{IP}$}
	\F(q^\mathsf e)=y^\mathsf e\quad\text{in }\H.
\end{align}
Throughout our work we assume that the given noisy data $\yd\in \H$ and $y^\mathsf e\in \text{range}(\F)$ satisfy
\begin{align*}
	{\|y^\mathsf e-\yd\|}_\H\leq \delta,
\end{align*}
where the noise level $\delta>0$ is assumed to be known.

\begin{assumption}[Assumptions on the forward operator]
	\label{Ass: forward operator}
	We assume that $\F:\Qad\to\H$ is injective, continuously Fr\'echet-differentiable and that \eqref{eq: IP} is ill-posed in the sense that $\F$ is not continuously invertible.
\end{assumption}
Due to the injectivity in Assumption \ref{Ass: forward operator} and $y^\mathsf e\in \text{range}(\F)$, $q^\mathsf e$ is the unique solution to \eqref{eq: IP}. Further, the ill-posedness of \eqref{eq: IP} poses challenges, since instead of the exact data, only the noisy measurement $\yd$ is available. Therefore, simple inversion fails to provide a reasonable reconstruction of the exact state. Instead one has to apply regularization methods that construct continuous approximations of the discontinuous inverse of $\F$, in order to obtain a stable approximate solution of \eqref{eq: IP}.

Let us introduce two guiding examples that are studied in our numerical experiments carried out in Section~\ref{section: numerical results}.
\begin{example}
	\label{ex: example_problems}
	We consider two possible scenarios based on the examples presented in \cite{Kaltenbacher_2009}. See also the works \cite{HaNeSc95,IK90,KS92,Vol02}. Let $\Omega \subset \R^d$ for $d\in\{1,2,3\}$ be a bounded, convex domain. Using \cite[Corollary 1.2.2.3]{Gri11} this implies that $\Omega$ has a Lipschitz-continuous boundary $\Gamma=\partial\Omega$. We choose $\H=L^2(\Omega)$, $H=L^2(\Omega)$, $V=H^1_0(\Omega)$ and $\C$ to be the compact, injective embedding from $V$ into $\H$.
	\begin{itemize}
		\item[(i)] \textit{Reconstruction of the reaction coefficient}: Let $\qa\in L^\infty(\Omega)$ be a given lower bound with $\qa\ge0$ a.e. (almost everywhere) in $\Omega$. We set $\Q=L^2(\Omega)$ and
		\begin{align}
			\label{Qad}
			\Qad\coloneqq\big\{q\in\Q\,\big|\,\qa\le q\text{ in }\Omega\text{ a.e.}\big\}.
		\end{align}
		Note that $\Qad$ is closed and convex in $\Q$. For given $q\in\Qad$ and $f\in H$ we consider the elliptic problem
		\begin{align}
			\label{PDE1}
		-\Delta u(\bx)+q(\bx)u(\bx)=f(\bx)\text{ f.a.a. }\bx\in\Omega,\quad u(\bx)=0\text{ f.a.a. }\bx\in\Gamma,
		\end{align}
		where `f.a.a.' means `for almost all'. Setting
		\begin{align*}
			a(u,v;q)\coloneqq\int_\Omega\nabla u\cdot\nabla v+quv\,\mathrm d\bx,\quad\ell(v)\coloneqq\int_\Omega fv\,\mathrm d\bx\quad\text{for }v\in V
		\end{align*}
		it follows that the weak formulation of \eqref{PDE1} can be expressed in the form \eqref{eq: VarState}.
		\item[(ii)] \textit{Reconstruction of the diffusion coefficient}: Let $\qa\in L^\infty(\Omega)$ be a given bound with \ReB{$\essinf_{\Omega}q_\mathsf a>0$}. We set $\Q=H^2(\Omega)\hookrightarrow C(\overline\Omega)$ and $\Qad$ is given as in \eqref{Qad}. For given $q\in\Qad$ and $f\in H$ we consider the following elliptic problem
		\begin{align}
			\label{PDE2}
			-\nabla\cdot\big(q(\bx)\nabla u(\bx)\big)=f(\bx)\text{ f.a.a. }\bx\in\Omega,\quad u(\bx)=0\text{ f.a.a. }\bx\in\Gamma.
		\end{align}
		For
		\begin{align*}
			a(u,v;q)\coloneqq\int_\Omega q\nabla u\cdot\nabla v\,\mathrm d\bx,\quad\ell(v)\coloneqq\int_\Omega fv\,\mathrm d\bx\quad\text{for }v\in V
		\end{align*}
		the weak formulation of \eqref{PDE2} is given by \eqref{eq: VarState}.
	\end{itemize}
	In both cases Assumption~\ref{A1} is satisfied. For Assumption~\ref{Ass: forward operator}, the Fr\'echet-differentiability is clear, too. Moreover, sufficient for the lack of continuity of the inverse of $\F$ is the compactness and sequential weak closedness of the operator $\F$ according to~\cite[Propositions 9.1 and 9.2]{ClasonLecture}. Since $\C$ is compact, the compactness is clear. The weak sequential closedness has been proven in \cite{KaltenbacherNeubauerScherzer+2008} under the assumption of $H^2(\Omega)$-regularity of the state $u$. In our case, this is fulfilled according to \cite[Theorem 3.2.12]{Gri11}. As $\C$ and $\Sol$ are injective, the operator~$\mathcal F=\C\circ\Sol$ is injective as well.
\end{example}

\subsection{Iteratively regularized Gau{\ss}-Newton method}
\label{sec:2.2}

To overcome the ill-posedness of $\F$ we utilize an iteratively regularized Gau{\ss}-Newton method (IRGNM) as a regularization method. We are interested in the minimization of the discrepancy
\begin{equation}
	\label{MinHatJ}
	\min\hat J(q)\coloneqq \frac{1}{2}\,{\|\F(q)-\yd\|}_\H^2\quad\text{subject to (s.t.)}\quad q\in\Qad
\end{equation}
in a stable way with respect to the noise level $\delta$, which will be achieved by adding a Tikhonov term and by early stopping in an iteratively linearized procedure (see below). In this paper, we concentrate on local convergence properties of our methods, which is often done if one is interested in the regularization; see \cite{KaltenbacherNeubauerScherzer+2008} and Remark~\ref{Rem:ConvNew1}. For this reason, we make use of the following assumption.

\begin{assumption}
	\label{AssumpMain}
	There exists a convex, closed, and bounded subset $\CQ\subset\Qad$ such that \eqref{MinHatJ} has a unique (local) minimizer $\bar q$  in the interior of $\CQ$.
\end{assumption}

\begin{remark}
	\label{Rem:Unconst}
	It follows from {\em Assumption~\ref{AssumpMain}} that the minimization problem
	\begin{equation}
		\label{Eq:IPinf}
		\min\hat J(q)\quad\text{s.t.}\quad q\in\CQ
	\end{equation}
	can be considered as a locally unconstrained problem. Thus, first-order necessary optimality conditions are $\nabla\hat J(\bar q)=0$ in $\Q$, where $\nabla\hat J$ stands for the gradient of~$\hat J$. Since we usually have $y^\mathsf e\neq\yd$, we expect also $q^\mathsf e\neq\bar q$ and therefore $\hat J(\bar q)>0$.
\end{remark}

Let $q^{k}\in\CQ$ be a given iterate which is sufficiently close to the local solution~$\bar q$. It follows from \cref{AssumpMain,Ass: forward operator} that $\F$ is continuously Fr\'echet-differentiable on $\CQ\subset\Qad$. Now, the IRGNM update scheme results from linearizing $\F$ at $q^k$ and minimizing the Tikhonov functional of the linearization
\begin{equation}
	\label{eq: IRGNMscheme_minimize}
	\tag{$\textbf{IP}^k_\alpha$}
	q(\alpha_k)=\argmin_{q\in\CQ} \frac{1}{2}\,{\|\F(q^{k})+\F'(q^{k})(q-q^{k})-\yd\|}_\H^2+ \frac{\alpha_k}{2}\,{\|q-q_\circ \|}_\Q^2,
\end{equation}
where $q_\circ\in\CQ$ is the regularization center and $\alpha_k >0$ a Tikhonov regularization parameter.

\begin{remark}
	Throughout this work, it is assumed that \eqref{eq: IRGNMscheme_minimize} possesses a unique solution $q(\alpha_k)$ which lies in the interior on $\CQ$. Therefore, \eqref{eq: IRGNMscheme_minimize} can be considered as a (locally) unconstrained problem so that we neglect the constraint $q\in\Qad$ in the following. In {\em\cite{KaltenbacherMain}}, it is shown under suitable conditions on $\F$ (see {\em Remark~\ref{rem: remark unconstrained}-(ii)}) that the iterates produced by the IRGNM stay in a ball in the interior of $\Qad$ (cf. {\em Assumption~\ref{AssumpMain}}).
\end{remark}

We accept the iterate $q^{k+1}\coloneqq q(\alpha_k)$ if $\alpha_k>0$ is chosen such that
\begin{equation}
	\label{eq: choice of alpha}
	\theta \hat J(q^k)\leq {\|\F'(q^{k})(q(\alpha_k)-q^{k})+\F(q^{k})-\yd\|}_\H^2 \leq \Theta \hat J(q^k),
\end{equation}
where $0<\theta<\Theta<1$ holds. Condition \eqref{eq: choice of alpha} can be obtained via an inexact Newton method as in \cite{KaltenbacherMain}, whereas we use a backtracking technique (see Algorithm~\ref{alg1: IRGNM}): If the first inequality in \eqref{eq: choice of alpha} is not fulfilled, then $\alpha_k$ is too small when solving \eqref{eq: IRGNMscheme_minimize}, so we increase $\alpha_k$ and solve \eqref{eq: IRGNMscheme_minimize} again. Similarly, we decrease $\alpha_k$ if the second inequality in \eqref{eq: choice of alpha} is not fulfilled.
\begin{algorithm}
	\caption{(IRGNM)}\label{alg1: IRGNM}
	\begin{algorithmic}[1]
		\Require Noise level $\delta$, discrepancy parameter $\tau>1$,  initial guess $q^0$,  initial regularization $\alpha_0>0$, regularization center $q_\circ$.
		\State Initialize $k=0$.
		\While{$\|\F(q^{k})-\yd\|_\H > \tau \delta$}
		\State Solve subproblem \eqref{eq: IRGNMscheme_minimize} for $q(\alpha_k)$.
		\While{\eqref{eq: choice of alpha} is not fulfilled}
		\State Increase (decrease) $\alpha_k$ if the first (second) part of \eqref{eq: choice of alpha} is not valid.
		\State Solve subproblem \eqref{eq: IRGNMscheme_minimize} for $q(\alpha_k)$.
		\EndWhile
		\State Set $q^{k+1}=q(\alpha_k)$, $k=k+1$.
		\EndWhile
	\end{algorithmic}
\end{algorithm}
If $\yd\notin \mbox{range}(\F)$, the iteration cannot converge, but it is possible to develop early stopping criteria to prevent noise amplification. For this we use the so-called \emph{discrepancy principle}: we stop the iteration after $k_*(\delta,\yd)$ steps provided
\begin{equation}\label{eq: discrepancy principle}
	\hat J(q^{k_*(\delta,\yd)})\leq \frac{1}{2}(\tau \delta)^2 \leq\hat J(q^k), \quad\ k=0,...,k_*(\delta,\yd).
\end{equation}
The parameter $\tau>1$ reflects the fact that we can not expect the discrepancy to be lower than the noise in the given data. Also, condition \eqref{eq: choice of alpha} can be understood as a linearized discrepancy principle. Note that the iterates depend on the noise level ($q^k=q^{k,\delta}$), but we suppress the superscript $\delta$ for readability.
\begin{remark}\label{rem: remark unconstrained}
	\begin{enumerate}
		\item [\em (i)] Let us turn back to {\em Example~\ref{ex: example_problems}-(ii)}. Since we do not want to consider $H^2$-regular coefficients in our numerical experiments, we assume that piecewise linear finite elements can be used to approximate the coefficients; see also {\em\cite{RBL,Hanke_1997,Vol02}}.
		\item [\em (ii)] A local convergence analysis of an adaptive discretization of the IRGNM is given in {\em\cite{KaltenbacherMain,diss_kirchner}}. It turns out that a tangential cone condition and a weak closedness condition on the forward operator $\F$ play an essential role in the analysis. For our particular examples these conditions were verified in {\em\cite{Hanke_1997,HaNeSc95,KaltenbacherNeubauerScherzer+2008}}. However, in this work, we do not focus on the theoretical foundations 
		in the sense of regularization theory
		but rather concentrate on the numerical realization of our reduced-order approach while assuming that our iterates converge.
	\end{enumerate}
\end{remark}
In the following, we will need the gradient of the discrepancy $\hat J$ and the optimality conditions for \eqref{eq: IRGNMscheme_minimize}, which are stated in the following remark.
\begin{remark}
	\label{Rem:FO}
	\begin{enumerate}
		\item[\em (i)] For $q\in\CQ$ the gradient $\nabla\hat J(q)\in \Q$ of $\hat J$ satisfies 
		\begin{equation}
			\label{eq: gradient}
			{\langle \nabla\hat J(q), d\rangle}_\Q = {\langle \mathcal B_u'p, d\rangle}_{\Q',\Q} = \partial_q a(u,p;d) \quad \text{for all }d\in \Q,
		\end{equation}
		where $u=\Sol(q)$ holds and $p=p(q)\in V$ is the solution of the adjoint equation
		\begin{equation}
			\label{eq: adjoint equation}
			a(v,p;q)=-\,{\langle \mathcal C^*(\mathcal Cu-\yd),v\rangle}_{V} \quad \text{for all }v\in V
		\end{equation}
		with the adjoint operator $\mathcal C^*:\H\to V$ of $\mathcal C$. For later use, we introduce the linear and continuous operator
		\begin{equation*} 
			{\langle\mathcal B_u d, v\rangle}_{V',V} = \partial_q a(u,v;d) \quad \text{for all }(d,v)\in \Q\times V.
		\end{equation*} 
        In \eqref{eq: gradient} we denote by $\mathcal B_u': V \to \Q'$ the dual operator of $\mathcal B_u$.
		\item[\em (ii)]
		Suppose that {\em Assumptions~\ref{Ass: forward operator}} and {\em\ref{AssumpMain}} hold. Let $u^k=\mathcal S(q^k)\in V$ be the state at the current iterate $q^k\in\CQ$. Due to $\alpha_k>0$ problem \eqref{eq: IRGNMscheme_minimize} is a linear-quadratic, strictly convex optimization problem with a unique (unconstrained) minimizer; see {\rm \cite[Theorem 2.14]{Troltzsch}}. The update $q^{k+1}\in\CQ$ is optimal for \eqref{eq: IRGNMscheme_minimize} if and only if there exists the linearized state $\ulin^{k+1}\in V$ and the linearized adjoint state $\plin^{k+1}\in V$ satisfying the optimality system
		\begin{subequations}
			\label{eq: optimality system subproblem}
			\begin{align}
				a(\ulin^{k+1},v;q^k)+{\langle\mathcal B_{u^k}(q^{k+1}-q^{k}), v\rangle}_{V',V}&=0 && \text{for all } v\in V,\\
				a(v,\plin^{k+1};q^k)+{\langle \C(u^k+\ulin^{k+1})-\yd, \C v \rangle}_\H&=0 && \text{for all } v\in V,\\
				{\langle\alpha_k (q^{k+1}-q_\circ) + \mathcal J_\Q^{-1}\mathcal B_{u^k}'\plin^{k+1},q\rangle}_\Q&=0&&\text{for all }q\in \Q,\label{eq: linearized gradient cond}
			\end{align} 
		\end{subequations}
		where $\mathcal J_\Q:\Q\to \Q'$ is the Riesz-isomorphism.
		\item[\em (iii)] Note that \eqref{eq: linearized gradient cond} implies the relationship
		\begin{align}
			\label{FO-q}
			q^{k+1}=q_\circ - \frac{1}{\alpha_k}\mathcal J_\Q^{-1}\mathcal B_{u^k}'{\plin^{k+1}}\quad\text{in }\Q.
		\end{align}
		In {\em Section~\ref{sec: QFOM}} we will essentially use \eqref{FO-q} to adaptively build finite-dimensional (reduced) parameter spaces for the iterates $\{q^k\}_{k\ge0}$, where we utilize $q_\circ$ as well as the ansatz spaces for the states and the dual solutions.
	\end{enumerate}
\end{remark}
%
\section{Adaptive RB schemes for infinite-dimensional parameter spaces}
\label{section: TRRB}
\setcounter{equation}{0}
Every step of the IRGNM consists of solving the linear-quadratic PDE-con\-strained optimization problem \eqref{eq: IRGNMscheme_minimize} on $\Q$ at least once. Thus, Algorithm~\ref{alg1: IRGNM} can be considered a many-query context for the parameterized equation \eqref{eq: VarState}. Hence, using the RB method to speed up the solution process seems natural.  Another reason for applying model reduction methods in the context of inverse problems are situations where the noise level is so high, such that only a vague reconstruction of the parameter is possible.

The fundamental problem in applying the RB method in our context is that the parameter space $\Q$ is infinite-dimensional and has to be discretized itself. In a discrete setting with $\Q_h =\mathrm{span}\,\{\phi_1,...,\phi_{N_\Q}\} $, one would replace the infinite-dimensional parameter with its finite element type approximation
\begin{align}
	\label{eq:qGalerkin}
	q=\sum_{i=1}^{N_\Q}\Qcoeff_i \phi_i\in \Q_h
\end{align}
with coefficients $\Qcoeff_1,\ldots,\Qcoeff_{N_\Q}\in \R^{N_\Q}$ to obtain a parameter affine decomposition of the bilinear form $a$.

For instance, if we can write the bilinear form $a$ as
\begin{align}
	\label{eq:SplitBilForm}
	a(u,v;q)=a_1(u,v)+a_2(u,v;q)\quad\text{for }u,v\in V\text{ and }q\in\Q
\end{align}
and use \eqref{eq:qGalerkin}, we derive the affine decomposition (cf. Assumption~\ref{A1}) of $a$ as
\begin{equation}
	\label{eq: affine_decomposition}
	a(u,v;q)=a_1(u,v)+\sum_{i=1}^{N_{\Q}} \Qcoeff_i \, a_2(u, v; \phi_i) \quad \text{for all }u,v \in V.
\end{equation}
Note that \eqref{eq:SplitBilForm} holds for our state equations introduced in Example~\ref{ex: example_problems}.

Importantly, the affine decomposition will consist of an arbitrarily large number $N_{\Q}\in \N$ of affine components determined by the discretization density of the space~$\Q_h$. This leads to the following known challenges in RB methods:
\begin{itemize}
	\item[(i)] A high number of affine components need to be projected onto the reduced space.
	\item[(ii)] The assembly of residual-based error estimates is infeasible because it involves the computation of Riesz representatives for each affine component per reduced basis element.
\end{itemize}
In what follows, we resolve these issues and derive an RB method for the IRGNM in two steps:
\begin{itemize}
	\item \textbf{Step~1:} We first concentrate on a problem-specific dimension reduction of the infinite-dimensional parameter space $\Q$.
	\item \textbf{Step~2:} We propose a classical RB approximation of the primal and dual state space $V$ on top of the reduced parameter space that overcomes both of the above-mentioned problems (i) and (ii).
\end{itemize}
%
\subsection{Step 1: Reduction of the parameter space}
\label{sec: QFOM}
%
Since the parameter space in our case is generally infinite-dimensional, we aim at reducing the number of affine components first and to adaptively construct a low-dimensional reduced space $\Q_r=\text{span}\,\{\upphi_1,...,\upphi_{n_\Q} \}\subset \Q$ with dimension $n_{\Q} \ll N_{\Q}$. For $q=\sum_{i=1}^{n_{\Q}} \mathsf q_i\upphi_i\in \Q_r$ and a bilinear form $a$ satisfying Assumption~\ref{A1} and \eqref{eq:SplitBilForm} we obtain a low-dimensional representation
\begin{equation}
	\label{eq: small_affine_decomposition}
	a(u,v;q) = a_1(u,v)+\sum_{i=1}^{n_{\Q}} \mathsf q_i a_{2,i}(u,v)\quad\text{for all }u,v \in V
\end{equation}
with $a_{2,i}(u,v)\coloneqq a_2(u,v;\upphi_i)$ for $i=1,...,n_\Q$. As a motivation for how to choose the snapshots for the parameter space, we recall \cref{Rem:FO} and therefore consider the first-order optimality condition of the regularized discrepancy
\begin{align*}
	\hat J_{\alpha}(q)\coloneqq\hat J(q)+\frac{\alpha}{2}\,{\|q-q_\circ\|}^2_\Q\quad\text{for }\alpha>0,\,q_\circ\in\Q\text{ and }q\in\Qad.
\end{align*}
Suppose $\bar q$ is a local unconstrained minimizer of $\hat J_\alpha$, then it holds using \eqref{eq: gradient}
\begin{align}
	\label{eq: grad}
	\nabla\hat J_{\alpha}(\bar q)=\alpha_k (\bar q-q_\circ)+ \mathcal J_\Q^{-1}\mathcal B_{\bar u}'p(\bar q) = 0;
\end{align}
compare also \eqref{FO-q}. It follows from \eqref{eq: grad} that the optimal parameter $\bar q$ is in~$\Q_r$ provided $q_\circ \in \Q_r$ and $\mathcal J_\Q^{-1}\mathcal B_{\bar u}'p(\bar q)\in\Q_r$. If one assumes a low-dimensional RB representation of the adjoint state the upper optimality condition connects the reduced basis of the parameter space with the reduced basis of the adjoint state. Therefore, we will adaptively enrich the reduced parameter space with the gradients $\nabla \hat J(q^k) =\mathcal J_\Q^{-1}\mathcal B_{u^k}'p(q^k)$ of the iterates $q^k$. 

The IRGNM with adaptive parameter space reduction (cf. Algorithm~\ref{alg: FOM IRGNM Q}) consists of outer and multiple inner iterations. In the outer iteration, we construct the reduced parameter space and in every inner iteration, we solve a subproblem with the IRGNM on the current reduced parameter space. Suppose we are given an iterate $q^k\in \CQ$ sufficiently close to the solution. The current reduced parameter space~$\Q_r^k$ contains the current iterate $q^k$, the regularization center~$q_\circ$, and the current gradient $\nabla\hat J(q^k)\in\Q$ as a basis function. Then, instead of \eqref{Eq:IPinf} we solve the low-dimensional minimization problem
\begin{equation}
	\label{eq: IRGNM parameter space subproblem}
	\min\hat J(q)\quad\text{s.t.}\quad q\in\CQ\cap\Q_r^k
\end{equation}
using the IRGNM to get the next iterate $q^{k+1}\in \Q_r^k$. Here we suppose the following hypothesis (cf. Assumption~\ref{AssumpMain} and Remark~\ref{Rem:Unconst}):

\begin{assumption}
	\label{AssumpMain2}
	For any $k\in\mathbb N$ problem \eqref{eq: IRGNM parameter space subproblem} has a unique (local) minimizer $\bar q^{k+1}$ in the interior of $\CQ\cap\Q_r^k$. Moreover, $\hat J$ is continuously Fr\'echet-differentiable in a neighborhood of $q^{k+1}$.
\end{assumption}

\begin{remark}
	\label{Rem:Unconst2}
	It follows from {\em Assumption~\ref{AssumpMain2}} that \eqref{eq: IRGNM parameter space subproblem} can be considered as a locally unconstrained minimization problem so that the constraint $q\in\CQ$ can be neglected.
\end{remark}

As a stopping criterion for \eqref{eq: IRGNM parameter space subproblem}, one can use a  discrepancy principle (compare \eqref{eq: discrepancy principle}) with a possibly modified noise level $\tilde \delta_k\geq \delta$. To update the parameter space, we compute the new gradient $\nabla\hat J(q^{k+1})$ and construct the space $\Q_r^{k+1}$ by adding $\nabla\hat J(q^{k+1})$ to $\Q^k_r$ and subsequent (Gram-Schmidt) orthonormalization. The initial regularization parameter~$\alpha_0^k$ of the inner IRGNM at outer step $k$ is chosen as follows: $\alpha_0 \coloneqq \alpha_0^0>0$ and $\alpha_0^k\coloneqq\alpha_1^{k-1}$, where $\alpha_1^{k-1}$ is the first accepted inner regularization parameter satisfying \eqref{eq: choice of alpha} of the previous outer iteration $k-1$. This procedure is repeated until the convergence criterion \eqref{eq: discrepancy principle} is fulfilled. We summarize the method in Algorithm~\ref{alg: FOM IRGNM Q}.
\begin{algorithm}
	\caption{(Parameter space reduced IRGNM: \QFOM)}\label{alg: FOM IRGNM Q}
	\begin{algorithmic}[1]
		\Require Initial guess $q^0\in\Qad$, discrepancy parameter $\tau>1$, noise level $\delta$, initial regularization parameter $\alpha_0^0$, regularization center $q_\circ\in \Q$, maximum number of inner IRGNM iterations $l^{\text{inner}}_{\text{max}}$.
		\State Set $k=0$ and initialize $\Q_r^0=\mbox{span}\{q_\circ, q^0,\nabla\hat J(q^0)\}$ by orthonormalization.
		\While{$\|\F(q^{k})-\yd\|_\H > \tau \delta$}
		\State Solve subproblem \eqref{eq: IRGNM parameter space subproblem} for $q^{k+1}$ using IRGNM with initial regulariza-
		\State tion $\alpha_0^k$, modified noise level $\tilde \delta_k$ and maximum iterations $l^{\text{inner}}_{\text{max}}$.
		\State  Update $\Q_r^k$ using $\nabla\hat J(q^{k+1})$ and extend the affine decomposition \eqref{eq: small_affine_decomposition}.
		\State Update $\alpha_0^{k+1}$, $\tilde \delta_{k+1}$ and set $k=k+1$. 
		\EndWhile
	\end{algorithmic}
\end{algorithm}

Another reason why the gradient is expected to be a good choice for the enrichment of the parameter space is that it provides information in the direction of the steepest descent of the objective~$\hat{J}$, i.e., we have $q^k-t_k\nabla\hat J(q^k)\in \Q_r^k$ for any stepsize $t_k > 0$, which implies a sufficient decrease of the objective $\hat J$. After a few iterations, the space $\Q_r^k$ contains also approximate information about the curvature in the form $\nabla\hat J(q^{k})-\nabla \hat J(q^{k-1})$ and $q^{k}-q^{k-1}$. This is used in Quasi-Newton methods ensuring superlinear local convergence. In this way, one can interpret Algorithm~\ref{alg: FOM IRGNM Q} as a subspace optimization method (cf. \cite{CSIAM-AM-2-585}; see also \cite{Wald2018} in an inverse problems context), that collects a set of search directions and computes the exact step length in each iteration. 

\begin{remark}
	\label{rem: other snapshts for Q}
	Other snapshot choices for the parameter space are possible:
	\begin{itemize}
		\item local or coarse snapshots consisting of indicator functions of some patches of the underlying mesh,
		\item curvature information by using an approximate solution of the linear-quadratic subproblem \eqref{eq: IRGNMscheme_minimize} or hessian products of $\hat J_{\alpha}$ in certain directions,
		\item gradients $\nabla\hat J(q)$ for $q\in \mathrm{span}\,\{q^{k}-q^{k-1}, \nabla\hat J(q^{k})-\nabla\hat J(q^{k-1})\}$, where the gradient serves as a non-linear mechanism to generate new information for $\Q_r^k$,
		\item subspace optimization techniques to construct $\Q_r^k$ (see {\em\cite{CSIAM-AM-2-585}}).
	\end{itemize}
\end{remark}
\noindent Our described procedure serves two major purposes:
\begin{itemize}
	\item[(i)] Reducing the number of unknowns for the IRGNM substantially such that \eqref{eq: IRGNMscheme_minimize} does not operate on $\Q$ (or $\Q_h$) but on the low dimensional space~$\Q_r$.
	\item[(ii)] Reducing the number of affine components from $N_\Q$ in \eqref{eq: affine_decomposition} to $n_\Q$ in~\eqref{eq: small_affine_decomposition}.
\end{itemize}
Point (i) alone is of interest in terms of regularization by discretization even if no additional state reduction is performed. Hence, we also investigate this method in the numerical experiments. However, since one has to solve repeatedly a FOM subproblem \eqref{eq: IRGNM parameter space subproblem}, the method will not pay off in terms of computational time unless the dimension of the parameter space dominates the cost of the FOM algorithm. If this is not the case, we suggest solving the subproblems \eqref{eq: IRGNM parameter space subproblem} inexactly by using a reduced-order approximation $\hat J_r$ of $\hat J$.
Significantly, Point (ii) is the reason why we can use an efficient RB approximation for the state space $V$. This approach is explained in the next section. 

\subsection{Step 2: Reduction of the state space}
\label{sec: QRB}

If a reduced parameter space $\Q_r$ is available, i.e., we have the low-dimensional affine representation \eqref{eq: small_affine_decomposition} of the bilinear form $a$ on $\Q_r$, we can efficiently apply the standard RB method to reduce the space $V$ for parameters in $\Q_r$. To this end, let a reduced state space $V_r$ of dimension ${n_V}\in \N$ be given. Given a parameter $q\in \Q_r\cap\Qad$, the state and parameter RB approximation of the state is given as $u_r=u_r(q)\in V_r$
\begin{equation}
	\label{eq: VarState_red}
	a(u_r,v;q) =\ell(v) \quad \text{for all }v \in V_r.
\end{equation}
We define the RB solution operator
\begin{align*}
	\Sol_r:\Q_r\cap\Qad\to V_r,\quad\Q_r\cap\Qad\ni q \mapsto \Sol_r(q)=u_r(q)\in V_r.
\end{align*}
Furthermore, the RB forward operator $\F_r$ and the RB discrepancy $\hat J_r: \Q_r\cap\Qad \to \R$ are introduced by
\begin{equation}
	\hat J_r(q) := \frac{1}{2}\,{\|\F_r(q)-\yd\|}_\H^2\quad\text{with }\F_r=\mathcal C\circ \mathcal S_r
\end{equation}
so that we solve
\begin{equation}
	\label{eq:IProm}
	\min\hat J_r(q)\quad\text{s.t.}\quad q\in\CQ\cap\Q_r
\end{equation}
instead of \eqref{eq: IRGNM parameter space subproblem}. Similar to Assumption~\ref{AssumpMain2}, we suppose that \eqref{eq:IProm} possesses a unique (local) solution $\bar q_r$ in the interior of $\CQ\cap\Q_r$. Let us mention that later both $\Q_r$ and $V_r$ are enriched and therefore depend on the iteration counter $k$.

Similar reduction schemes are used to compute the derivatives of $\F_r$ and $\hat J_r$. For instance, the adjoint $p_{r}=p_r(q)\in V_r$ is given as the solution of
\begin{equation*}
	a(v,p_{r};q)=-\,{\langle \mathcal C^*(\mathcal Cu_{r}-\yd),v\rangle}_{V_r} \quad \text{for all }v\in V_r
\end{equation*}
and the gradient of the reduced discrepancy can be expressed as
\begin{equation}
	\nabla\hat J_r(q) = \mathcal J_\Q^{-1}\mathcal B_{u_r}{p_r} \in \Q_r.
\end{equation}

To control the RB error of $\hat J_r$ (w.r.t. to the state space reduction), we need an a-posteriori error estimate.
For fixed $q\in \Q_r$, $u\in V$ we define the primal residual $r_{pr}(u;q)\in V'$ as	
\begin{equation*}
	r_{pr}(u;q)[v]\coloneqq\ell(v)-a(u,v;q)\quad \text{for all }v\in V,
\end{equation*}
whereas the adjoint residual is given for $p\in V$ as
\begin{equation*}
	r_{du}(u,p;q)[v]\coloneqq -\,{\langle\mathcal C^*(\mathcal Cu-\yd,v\rangle}_V - a(v,p;q)\quad\text{for all }v\in V.
\end{equation*}
With these definitions, we can formulate the error estimator for the discrepancy~$\hat J$.
\begin{proposition}[A-posteriori error estimate for $\hat J$] \label{prop: error estimator J}
	Let $q\in \Q_r\cap \Qad$ and $\coercivityconstant_q>0$ be the ($q$-dependent) coercivity constant for $a(\cdot,\cdot\,;q)$. Then:
		\begin{equation}\label{eq: error estimator}
			|\hat J(q)-\hat J_r(q) |\leq \Delta_{\hat J}(q),
		\end{equation}
		with
		\begin{align*}
			\Delta_{\hat J}(q)&\coloneqq \frac{{\|\mathcal C\|}_{\mathscr L(V,\H)}^2}{2}\Delta_{pr}(q)^2+ {\|r_{du}(u_{r},p_{r};q) \|}_{V'} \Delta_{pr}(q),\\
			\Delta_{pr}(q)&\coloneqq \frac{1}{\coercivityconstant_q}\, {\|r_{pr}(u_{r};q) \|}_{V'}.
		\end{align*}
		%
\end{proposition}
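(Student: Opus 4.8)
The plan is to reduce the claim to a state-error estimate. Writing $J(w)\coloneqq\tfrac12\|\C w-\yd\|_\H^2$ for $w\in V$, we have $\hat J(q)=J(u)$ and $\hat J_r(q)=J(u_r)$ with $u=\Sol(q)$ and $u_r=\Sol_r(q)$, so that $\hat J(q)-\hat J_r(q)=J(u)-J(u_r)$. Since $J$ is quadratic, I would expand it exactly about $u_r$,
\begin{equation*}
J(u)-J(u_r)=\langle \C^*(\C u_r-\yd),\,u-u_r\rangle_V+\tfrac12\|\C(u-u_r)\|_\H^2,
\end{equation*}
which splits the error into a linear term (first) and a quadratic remainder (second). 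The goal is then to match these two contributions with the two terms of $\Delta_{\hat J}(q)$.

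Next I would establish the primal a-posteriori bound. Because $u$ solves \eqref{eq: VarState} and $u_r$ solves the reduced state equation \eqref{eq: VarState_red}, the primal residual satisfies $r_{pr}(u_r;q)[v]=a(u-u_r,v;q)$ for all $v\in V$. Testing with $v=u-u_r$ and invoking coercivity with constant $\coercivityconstant_q$ gives
\begin{equation*}
\coercivityconstant_q\|u-u_r\|_V^2\le a(u-u_r,u-u_r;q)=r_{pr}(u_r;q)[u-u_r]\le \|r_{pr}(u_r;q)\|_{V'}\,\|u-u_r\|_V,
\end{equation*}
hence $\|u-u_r\|_V\le\Delta_{pr}(q)$. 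The quadratic remainder is then controlled by continuity of $\C$, namely $\tfrac12\|\C(u-u_r)\|_\H^2\le\tfrac12\|\C\|_{\mathscr L(V,\H)}^2\,\Delta_{pr}(q)^2$, reproducing the first term of $\Delta_{\hat J}(q)$.

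The crux, and the step I expect to be the main obstacle, is the linear term. Here I would use the definition of the adjoint residual to rewrite it as $\langle \C^*(\C u_r-\yd),u-u_r\rangle_V=-a(u-u_r,p_r;q)-r_{du}(u_r,p_r;q)[u-u_r]$. The first summand equals the primal residual evaluated at the reduced adjoint, $a(u-u_r,p_r;q)=r_{pr}(u_r;q)[p_r]$, and since $p_r\in V_r$ while $u_r$ solves \eqref{eq: VarState_red} (so that $r_{pr}(u_r;q)[v]=0$ for all $v\in V_r$), this contribution vanishes. This Galerkin-orthogonality cancellation is the key point: it removes the otherwise uncontrolled primal-residual-against-adjoint term and leaves exactly $-r_{du}(u_r,p_r;q)[u-u_r]$. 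Bounding the latter by $\|r_{du}(u_r,p_r;q)\|_{V'}\,\|u-u_r\|_V\le\|r_{du}(u_r,p_r;q)\|_{V'}\,\Delta_{pr}(q)$ yields the second term of $\Delta_{\hat J}(q)$. A final triangle inequality combining the two contributions gives \eqref{eq: error estimator}. The remaining ingredients (the exact quadratic expansion of $J$ and the Cauchy--Schwarz, continuity and coercivity estimates) are routine; the one place that genuinely requires care is correctly identifying and exploiting the Galerkin orthogonality of the reduced primal problem to discard the cross term.
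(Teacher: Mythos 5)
Your proof is correct: the quadratic expansion of $J$ about $u_r$, the coercivity bound $\|u-u_r\|_V\le\Delta_{pr}(q)$, the Galerkin-orthogonality cancellation $r_{pr}(u_r;q)[p_r]=0$, and the final residual bounds all check out and yield exactly $\Delta_{\hat J}(q)$. The paper itself gives no details but defers to the analogous estimate in \cite[Proposition~3.6]{Keil2021nonco-54293}, whose argument is precisely this standard one, so your proposal is essentially a self-contained version of the proof the paper relies on.
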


\begin{proof}
	The proof is analoguous to the error estimation in \cite[Proposition~3.6]{Keil2021nonco-54293}. 
\end{proof}
To evaluate the error bound $\Delta_{\hat J}$ efficiently online an offline/online decomposition of residual is necessary, which involves the computation of Riesz-representatives (see \cite{book}) of the order of $n_\Q n_V$. Furthermore, projecting the operators \eqref{eq: VarState_red} onto $V_r$ is also dependent on the number of affine coefficients. Hence, a low number of affine components $n_\Q$ is crucial.

\subsection{Parameter and state space RB trust region IRGNM}
\label{sec: TRIRGNM}

The efficient error bound on $\Q_r$ from Section~\ref{sec: QRB} enables to develop a certification of the RB model of the state equation by an error-aware trust region strategy. In this section, we combine ideas from the trust region reduced basis (TRRB) method from \cite{Keil2021nonco-54293,keil2022adaptive,QGVW17} and the strategies presented in the last two subsections to (i) cope with the infinite-dimensional parameter space and (ii) solve the ill-posed problem \eqref{eq: IP} with an adaptive IRGNM in a trust (TR) region setting. For~(i) we adaptively enrich the reduced parameter space~$\Q_r$ with the gradient of the current iterate (as described in Section~\ref{sec: QFOM}). For~(ii), we use a parameter and state space reduced inner IRGNM instead of a state space reduced Newton method as in \cite{Keil2021nonco-54293}. In particular, we only enforce an error-aware TR criterion on the state space.

Just as the IRGNM method with pure parameter space reduction, the parameter and state space reduced TR-IRGNM (\QTRRB) consists of outer and multiple inner iterations. In every outer iteration $k$, we enrich the reduced parameter space $\Q_r^k$ as well as the reduced state space $V_r^k$. The parameter space reduction $\Q_r^k$ simplifies the affine decomposition enabling the state space RB method to be efficient. Let Assumption~\ref{AssumpMain} be valid and $q^0\in \CQ\subset\Qad$ be sufficiently close to the (unconstrained) solution $\bar q$. The initial reduced state space~$V_r^0$ contains the primal and dual states $u(q^0), p(q^0)\in V$. Since the adjoint state is already computed, the gradient $\nabla\hat J(q^k)$ is cheaply available and is added to the initial reduced parameter space $\Q_r^0$. As before, we suppose $q^0, q_\circ\in \Q_r^0$. If the iterate $q^k$ does not satisfy the overall discrepancy principle~\eqref{eq: discrepancy principle}, we perform the following steps to update the iterate.

\subsubsection*{Computation of the AGC point.}

Throughout we assume that the iterates $q^k$ belong to $\CQ\subset\Qad$. First, we compute the approximated generalized Cauchy (AGC) point $\qagc^k\in \Q_r^k$, which can be identified in the inverse problem setting as a Landweber-type update and is defined as follows
\begin{equation}
	\label{eq: agc point}
	\qagc^k = q^k-t_k\nabla\hat J_r(q^k)\in \Q_r^k.
\end{equation} 
In \eqref{eq: agc point} the scalar $t_k>0$ is a stepsize chosen to satisfy $\qagc^k\in\Qad$, a sufficient decrease condition in the reduced objective 
\begin{equation}
	\label{eq: armijo 1}
	\hat J_r(\qagc^k)-\hat J_r(q^k)\leq - \frac{\kappa_{\mbox{\tiny arm}}}{t_k}\,{\|q^k - \qagc^k \|}_\Q^2,
\end{equation}
for $\kappa_{\mbox{\tiny arm}}>0$ and the TR condition 
\begin{equation}
	\label{eq: armijo 2}
	\mathcal{R}^k_{\hat J}(\qagc^k)\leq \TRradius^{(k)}.
\end{equation}
Here $\TRradius^{(k)}>0$ denotes the current TR radius and the relative error estimator is defined as
\begin{equation*}
	\mathcal{R}^k_{\hat J}(q)\coloneqq \frac{\Delta_{\hat J}(q)}{\hat J_r(q)} \quad\text{for }q\in \Q_r^k\cap\Qad
\end{equation*} 
with $\Delta_{\hat J}(q)$ from Proposition~\ref{prop: error estimator J}.
The Landweber step/AGC point ensures a sufficient decrease in every iteration and serves as an initial guess for the subproblem solver. Conditions \eqref{eq: armijo 1} and \eqref{eq: armijo 2} are found via a backtracking strategy, which is initialized for the computation of the AGC point as $\ReA{t} = 0.5\| \nabla\hat J(q^0)\|_\Q^{-1}$, which is the step length that was used in \cite{RBL,KaltenbacherNeubauerScherzer+2008} for the Landweber algorithm. \ReB{Note that it is always possible to find a step size $t_k>0$ with \eqref{eq: armijo 2} by the exactness of the ROM at the current iterate $q^k$ and the continuity of the error estimator $\Delta_{\hat{J}}.$}
%
\subsubsection*{The TR subproblem.}

We compute the trial step $\qtrial \in \CQ\cap\Q_r^k$ by solving the parameter and state space-reduced problem
\begin{equation}
	\label{eq: TR subproblem}
	\tag{$\textbf{IP}^k_r$}
	\min_{q\in\CQ\cap\Q_r^k}\hat J_r(q) 
	\quad\text{s.t.}\quad\mathcal{R}^k_{\hat J}(q)\leq \TRradius^{(k)}.
\end{equation}
Analogously to Remark~\ref{Rem:Unconst} we assume that \eqref{eq: TR subproblem} admits a locally unique (unconstrained) solution in $\Q_r^k$ so that we can neglect the convexity constraint $q\in\CQ$ in the numerical solution method. Thus, \eqref{eq: TR subproblem} can be solved using the IRGNM with Tikhonov regularization (see Algorithm~\ref{alg1: IRGNM}). Again, the additional TR constraint is treated using an Armijo-type backtracking technique (see \cite{keil2022adaptive, QGVW17, Banholzer2020AnAP}), which means the error estimator $\Delta_{\hat J}$ has to be cheaply available. For the initialization of the Armijo backtracking we use the initial step length $\ReA{t} =1$ in this case. As an initial guess, we use the AGC point $\qagc^k$, which is defined in \eqref{eq: agc point}. The initial regularization parameter $\alpha_0^k$ is chosen as in Section~\ref{sec: QFOM}.
Note that we only control the approximation quality of the state-space RB with $\Delta_{\hat J}$ which also contains approximation information about the primal and the dual state. However, apart from the dual model, we do not control derivative information which has shown advantageous in terms of computational efficiency in former works (see e.g. \cite{Banholzer2020AnAP}). \ReA{Let $q^{k,l}\in \Q_r^k$ be the $l$-th iterate of the inner IRGNM at outer iteration $k$}. As stopping criteria for the subproblem solver, we use on the one hand a reduced discrepancy principle 
\begin{equation}
	\label{eq: TR stopping 1}
	{\|\F_r(\ReA{q^{k,l}})-\yd\|}_\H \leq \tilde{\tau} \tilde{\delta}_k
\end{equation}
with a (possibly) modified noise level $\tilde{\delta}_k\geq \delta$, e.g. $\tilde{\delta}_k=\hat J(q^k)$ and \ReA{$\tilde{\tau}\geq \tau$ with $\tau$ from \eqref{eq: discrepancy principle}}. On the other hand, we use the usual TR stopping condition (see \cite{Keil2021nonco-54293,keil2022adaptive,QGVW17})
\begin{equation}
	\label{eq: TR stopping 2}
	\beta_1 \TRradius^{(k)}\leq\mathcal{R}^k_{\hat J}(\ReA{q^{k,l}})\leq \TRradius^{(k)},
\end{equation}
where $\beta_1\in (0,1)$ is close to one to \ReB{prevent the inner solver iterating  near the TR boundary, where the ROM becomes inaccurate. We denote by $\qtrial$ the result of the inner IRGNM satisfying either \eqref{eq: TR stopping 1} or \eqref{eq: TR stopping 2}.}

\subsubsection*{Acceptance of the trial step and modification of the TR radius.}

We consider cheaply available sufficient and necessary optimality conditions for the acceptance of the trial step $\qtrial$ (see \cite{QGVW17, Banholzer2020AnAP}). A sufficient condition for acceptance is the following
\begin{equation}
	\label{eq: TR sufficient crit}
	\hat J_r(\qtrial)+\Delta_{\hat J}(\qtrial)< \hat J_r(\qagc^k)
\end{equation}
and a necessary condition is
\begin{equation}
	\label{eq: TR necessary crit}
	\hat J_r(\qtrial)-\Delta_{\hat J}(\qtrial)\ReB{\leq}\hat J_r(\qagc^k).
\end{equation}
\ReB{Note that the negated version of statement \eqref{eq: TR necessary crit} is sufficient for rejection of $\qtrial$ (cf. line $13$ in Algorithm \ref{alg: TRRB IRGNM Q})}. If these cheap conditions do not give information about acceptance or rejection, we use the corresponding FOM condition:
\begin{equation}
	\label{eq: TR fom crit}
	\hat J(\qtrial)\leq\hat J_r(\qagc^k).
\end{equation}
In the case of acceptance, we set  $q^{k+1}= \qtrial$ and enlarge the TR radius, if the RB objective predicts the actual reduction, i.e., we check if
\begin{equation}
	\label{eq: TR reduction coeff}
	\varrho^{(k)}\coloneqq \frac{\hat J(q^{k})-\hat J(q^{k+1})}{\hat J_r(q^{k})-\hat J_r(q^{k+1})}\geq \beta_2 \in \big[\nicefrac{3}{4},1\big).
\end{equation}
Note that in case of acceptance, we enrich the spaces $\Q_r^k$ and $V_r^k$, such that the FOM quantity $\hat{J}(q^{k+1})$ is available anyway. In the case of rejection, we set $q^{k+1}= q^k$ and diminish the TR radius.

\subsubsection*{Extending the reduced spaces $\bm{\Q_r}$ and $\bm{V_r}$.}
In case of acceptance, we compute the state $u(q^{k+1})$ and check the FOM discrepancy principle. If this is not fulfilled, we compute the adjoint state $p(q^{k+1})$ and the FOM gradient $\nabla\hat J(q^{k+1})$. 
Then, we enrich the space~$\Q_r^{k}$ by orthonormalization and enlarge the affine decomposition. Afterward, we update the RB space $V_r^{k}$ using $u(q^{k+1})$ and $p(q^{k+1})$ by orthonormalization, and the error estimator is extended by the Riesz representatives of the new affine component and the new state space RB elements. For another combined parameter and state space enrichment strategy we refer to Remark \ref{rem: other enrichment strategies}. The method can be furthermore combined with the skip-basis-enrichment strategy from \cite{Banholzer2020AnAP}.

\subsubsection*{Assembly of the error estimator.}

The assembly of the error estimator is the most expensive part in terms of computation time and FOM solves (see Section \ref{section: numerical results}) for the computation of the Riesz representatives (cf. \cite{book}), especially if~$n_\Q$ and~$n_V$ are large. In this case, it may be more expensive to online/offline decompose the residual norms in Proposition \ref{prop: error estimator J} than to evaluate them online in the Armijo-type line search. If this is the case in iteration $k>2$, we do not assemble the error estimator anymore but compute it online in the next iteration. Formally, we stop the assembly of the error estimator in iteration $k+1$ if it holds
\begin{equation}\label{eq: error est assembly condition}
	K_{\mbox{\tiny ass}}^{k}> K_{\mbox{\tiny online}}^{k},
\end{equation}
where 
\begin{align*}
	K_{\mbox{\tiny ass}}^k\coloneqq\mbox{dim}(V_r^k)\big(\mbox{dim}(\Q_r^k)-\mbox{dim}(\Q_r^{k-1})\big)+\mbox{dim}(\Q_r^{k-1})\big(\mbox{dim}(V_r^k)-\mbox{dim}(V_r^{k-1} \big)
\end{align*}
is the number of Riesz representatives that were needed to update the error estimator from iteration $k-1$ to $k$. On the other hand, $K_{\mbox{\tiny online}}^{k}$ is the number of FOM solves, which would be needed to compute the error estimator online in iteration $k$. In detail, this is twice the number of total Armijo iterations for the computation of the AGC point and the steplength in every iteration of the inner IRGNM at outer iteration $k$, since in every Armijo iteration we have to evaluate the error estimator $\Delta_{\hat{J}}$ online, which involves the computation of the primal and dual residual norm (cf. Proposition \ref{prop: error estimator J}). \ReA{Note that verifying condition \eqref{eq: error est assembly condition} is numerically cheap because it involves only counting iterations of the backtracking procedure and checking the dimension of the reduced spaces. For a detailed study of this condition, we refer to Section \ref{subsec: error est case study}.} The whole procedure of the parameter and state space reduced method is depicted in Algorithm \ref{alg: TRRB IRGNM Q}.

\begin{algorithm}
	\caption{(Parameter \& state space reduced TR-IRGNM: \QTRRB)}\label{alg: TRRB IRGNM Q}
	\begin{algorithmic}[1]
		\Require Initial guess $q^0$, discrepancy parameters \ReA{$\tilde{\tau}\geq \tau >1$}, noise level $\delta$, TR radius $\eta^{{(0)}}$, boundary parameter $\beta_1\in (0,1)$, tolerance for enlargement of the radius $\beta_2 \in [3/4,1)$, shrinking factor $\beta_3\in (0,1)$, initial regularization parameter $\alpha_0^0$, Armijo parameter $\kappa_{\mbox{\tiny arm}}>0$, regularization center $q_\circ$.
		\State Set $k=0$ and initialize the RB model at $q^0$: create $V_r^0=\mbox{span}\{u(q^0),p(q^0)\}$, $\Q_r^0=\mbox{span}\{q_\circ, q^0,\nabla\hat J(q^0)\}$ by orthonormalization.
		\While{$\|\F(q^{k})-\yd\|_\H > \tau \delta$}
		\State Compute AGC point $\qagc^k$ according to \eqref{eq: agc point}.
		\State Solve \eqref{eq: TR subproblem} 
		using IRGNM with stopping criteria \eqref{eq: TR stopping 1}, \eqref{eq: TR stopping 2} for $\qtrial$ 
		\State with initial regularization $\alpha_0^k$.
		\If{\eqref{eq: TR sufficient crit}} 
		\State Accept trial step $q^{k+1}=\qtrial$, update $V_r^k$ at $u(q^{k+1}),\  p(q^{k+1})$ and $\Q_r^k$ 
		\State at $\nabla\hat  J(q^{k+1})$.
		\State Compute $\varrho^{(k)}$ from \eqref{eq: TR reduction coeff}.
		\If{$\varrho^{(k)}>\beta_2$}
		\State Enlarge radius $\TRradius^{(k+1)}=\beta_3^{-1} \TRradius^{(k+1)}$.
		\EndIf
		\ElsIf{not \eqref{eq: TR necessary crit} }
		\State  Reject trial step, set $q^{k+1}=q^k$ and shrink radius $\TRradius^{(k+1)}=\beta_3 \TRradius^{(k+1)}$.
		\Else 
		\State Compute the FOM discrepancy $\hat J(\qtrial)$.
		\If{\eqref{eq: TR fom crit}} 
		\State Accept trial step $q^{k+1}=\qtrial$, update $V_r^k$ at $u(q^{k+1}), p(q^{k+1})$ and 
		\State $\Q_r^k$  at $\nabla\hat  J(q^{k+1})$.
		\If{$\varrho^{(k)}>\beta_2$}
		\State Enlarge radius $\TRradius^{(k+1)}=\beta_3^{-1} \TRradius^{(k+1)}$.
		\EndIf
		\Else 
		\State Reject trial step, set $q^{k+1}=q^k$, shrink radius $\TRradius^{(k+1)}=\beta_3 \TRradius^{(k+1)}$.
		\EndIf
		\EndIf
		\State In case of acceptance update $\tilde{\delta}_{k+1}$, $\alpha_0^{k+1}$ and set $k=k+1$.
		\EndWhile
	\end{algorithmic}
\end{algorithm}

\begin{remark}[Other enrichment strategies]\label{rem: other enrichment strategies}
	One can also enrich the state RB space with the sensitivities \ReA{$\ulin$} and \ReA{$\plin$} at $q^k$ in a direction $d$ (see \eqref{eq: optimality system subproblem}). Then curvature information in the form $\nabla^2 \tilde J_{\alpha}(q^k) d$ is available for an enrichment of~$\Q_r^k$, where~$\tilde J_{\alpha}$ is the linearized discrepancy cost from \eqref{eq: IRGNMscheme_minimize}. We tested this approach numerically for~$d = \nabla \hat J(q^k)$ but did not achieve a more efficient RB method for our examples.
\end{remark}

\subsubsection*{\ReA{Convergence}}\label{Rem:ConvNew1}
 \ReA{We distinguish between convergence w.r.t. the iteration counter $k$ and convergence w.r.t. the noise level $\delta$ tending to zero (the regularization). Global convergence w.r.t. to the iteration counter $k$ can be ensured for our reduced methods. For the \QFOM$\ $, every step is at least as good as a gradient descent step with exact step length (if the maximum inner iterations are chosen sufficiently large).
 On the other hand, the steps in the \QTRRB$\ $are at least as good as a gradient descent step with Armijo step length for the \QTRRB, since we are exact at the current iterate and the current gradient due to $q^k, \nabla J(q^k)\in \Q^k_r$ and $u^k,p^k\in V^k_r$. However, one cannot expect to obtain efficient methods in this case, since if the initial guess is distant from the solution, the reduced methods might require numerous iterations. Without a strategy to coarsen the basis during the iterations, the computational effort may grow enormously. However, conditions on how to achieve online coarsening are beyond the scope of this paper. Global convergence w.r.t. the iteration counter for an error-aware TRRB method for PDE-constrained parameter optimization using only state space reduction was proven in~{\rm\cite{Banholzer2020AnAP}}. For the IRGNM, a local convergence and regularization result for adaptive finite element approximations was presented in \cite{KaltenbacherMain}. Further, the authors in {\rm\cite{Wang_2005}} proved local convergence and a regularization property for ill-posed problems for a standard metric TR algorithm under similar conditions as the ones mentioned in  Remark~\ref{rem: remark unconstrained}. 
 Thus, we expect a local convergence result in the sense of regularization theory also for our method. However, this aspect is not addressed in the present paper and is subject to future work.
Note that in our case the trust region constraint is not necessarily present to ensure global convergence w.r.t. to the iteration counter, but to serve as a regularizing constraint (see \cite{Wang_2005}) and to control the ROM error.}
%
\section{Numerical experiments}
\label{section: numerical results}
\setcounter{equation}{0}

In this section, we compare the numerical performance of the proposed algorithms applied on the two scenarios from \cref{ex: example_problems}. For the sake of brevity, in this section, we call them as follows: 
\begin{enumerate}
	\item \FOM: the standard FOM IRGNM (cf. Algorithm \ref{alg1: IRGNM}).
	\item \QFOM: the parameter space reduced IRGNM (cf. Algorithm \ref{alg: FOM IRGNM Q}).
	\item \QTRRB: the combined parameter and state space reduced TR-IRGNM (cf. Algorithm \ref{alg: TRRB IRGNM Q}).
\end{enumerate}

\begin{remark}
	Let us recall that {\QFOM} is not considered as a solution method on its own, but rather as a necessary step to obtain a parameter space reduction that enables us to build an efficient \QTRRB.
\end{remark}

For the simulations we used a \texttt{Python} implementation, using pyMOR (\cite{doi:10.1137/15M1026614}) for the built-in full-order discretization and the model reduction part. 
The source code to reproduce the results is available at \cite{source_code}.
Our numerical findings were obtained on a MacBook Pro 2020 with a 2,3 GHz Quad-Core i7 and 16 GB RAM.

\subsection{Computational details}

For the following model problems, we choose the computational domain $\Omega=(0,1)^2\subset\mathbb R^2$. The FOM discretization of the spaces $\Q$, $\H$, $H$, and $V$ is realized by the space $Q_h$ spanned by piecewise bilinear finite element (FE) basis functions on quadrilateral cells with $N_{Q}= 90,601$ dofs. The noisy data $\yd$ is generated as follows: First, the exact parameter $q^\mathsf e$ is interpolated on a very fine mesh with $361,201$ dofs, then the state equation for the exact parameter is solved to obtain the exact state $u^\mathsf e$, which is interpolated on the grid used for the computations. Thereafter, we add uniformly distributed noise $\xi\in\H\setminus\{0\}$ with noise level $\delta>0$ such that
\begin{equation*}
	\yd=\mathcal Cu^\mathsf e+ \delta\,\frac{\xi}{\|\xi\|_{\H}}.
\end{equation*}
%
The noise level was set to $\delta=10^{-5}$ and the parameters for the inner and outer IRGNM are chosen as
\begin{align*}
	\theta = 0.4,\quad\Theta=0.9,\quad\tau=3.5,\quad\tilde\tau=1,\quad\tilde\delta_k= \delta. 
\end{align*}
The TR parameters are chosen as:
\begin{align*}
	\beta_1=0.95,\quad\beta_2=0.75,\quad\beta_3=0.5,\quad\eta^{(0)}=0.1,\quad \kappa_{\mbox{\tiny arm}}=10^{-12}.    
\end{align*}
In all numerical experiments, the inequality constraints in $\Qad$ are not taken explicitly into account and the optimization problems are considered as unconstrained optimization problems in $\CQ$ (cf. Assumption \ref{AssumpMain}), which was also done, e.g., in \cite{KaltenbacherMain,IK90,Vol02}. Moreover, we use a discretize-before-optimize (DBO) approach. In particular, we solve the linear-quadratic subproblems \eqref{eq: IRGNMscheme_minimize} using the conjugate gradient (CG) algorithm. For the \QFOM$\ $algo\-rithm we solve the subproblems inexactly by limiting the maximum number of inner IRGNM iterations to \ReA{$l^{\text{inner}}_{\text{max}}=2$} (cf.~Algorithm \ref{alg: FOM IRGNM Q}) since this is enough to obtain a new snapshot for the reduced parameter space, while it avoids unnecessary FOM CG iterations.
In the following numerical experiments, we consider the two scenarios from \cref{ex: example_problems} for a right-hand side $f\equiv 1$, where we reconstruct a reaction/diffusion coefficient starting from a background $q^0=q_\circ \equiv 3\in\Qad$. The lower bound in the definition of $\Qad$ in~$\eqref{Qad}$ is given as $q_\mathsf a\equiv 0.001$. In all experiments, the observation operator is the canonical embedding and we have $\|\C\|_{\mathcal L(V,\H)} = 1$ for the error estimator~$\Delta_{\hat J}$ in Proposition~\ref{prop: error estimator J}.

We compare the three proposed algorithms in terms of FOM PDE solves to measure the impact of the state space reduction and the number of FOM $B_u$ (and $B_u'$)  applications, which are needed to compute the gradient in \eqref{eq: gradient} and in each conjugate gradient iteration (see the optimality system \eqref{eq: optimality system subproblem}) to measure the impact of the parameter space reduction. Moreover, we compare the algorithms in terms of total computational time and the quality of the reconstruction.
\begin{remark}[The operator $\mathcal B_u$]
	Let $u\in V$ be fixed. We discuss the application of the operators $\mathcal B_u:\Q \to V'$ and $\mathcal B_u': V\to \Q'$ for the two cases from Example~{\rm\ref{ex: example_problems}}.
	\begin{itemize}
		\item[\em (i)] For the reaction problem we have for $q\in \Q$ and $v\in V$
		\begin{align*}
			{\langle \mathcal B_u q, v\rangle}_{V',V} = \int_{\Omega} q u v \ d\bx
		\end{align*}
		and for $p\in V$ and $v\in \Q$
		\begin{align*}
			{\langle \mathcal B_u' p, v\rangle}_{\Q',\Q} = \int_{\Omega} v u p \ d\bx.
		\end{align*}
		Since we discretize the spaces $\Q$ and $V$ using the same FE space $Q_h$, we obtain a discrete symmetric operator $B_u\in \R^{N_Q\times N_Q}$, which is the weighted mass matrix with
		\begin{align*}
			(B_u)_{i,j}=\int_{\Omega} u v_i v_j \ d\bx \quad \mbox{for } i,j=1...,N_Q.
		\end{align*}
		\item[\em (ii)] For the diffusion problem we have for $q\in \Q$ and $v\in V$
		\begin{align*}
			{\langle\mathcal B_u q,v\rangle}_{V',V} = \int_{\Omega} q \nabla u\cdot \nabla v \ d\bx.
		\end{align*}
		and for $p\in V$ and $v\in \Q$
		\begin{align*}
			{\langle \mathcal B_u' p, v\rangle}_{\Q',\Q} = \int_{\Omega} v \nabla u\cdot\nabla p \ d\bx.
		\end{align*}
		is no more symmetric and can be discretized by a matrix $B_u\in \R^{N_Q\times N_Q}$ such that
		\begin{align*}
			(B_u)_{i,j}=\int_{\Omega} v_j \nabla u \cdot \nabla v_i \ d\bx \quad \mbox{for } i,j=1...,N_Q
		\end{align*}
		and the discrete actions of $\mathcal B_u$ and $\mathcal B_u'$ can be realized on the FE level by multiplication of $B_u$ and its transpose $B_u^T$ respectively.
	\end{itemize}
\end{remark}

\subsection{Run~1: Reconstruction of the reaction coefficient -- Test~1}
\label{subsec. reconstr. reaction coefficient}

We consider the situation in \cref{ex: example_problems} (i) of reconstructing the reaction coefficient $q\in\Q=L^2(\Omega)$; cf. also \cite{diss_kirchner}. Thus, we study the parametrized problem
\begin{equation}\label{eq: reaction strong}
	\begin{aligned}
		-\Delta u(\bx)+&q(\bx) u(\bx)=f(\bx)&&\text{for all }\bx\in\Omega,\\
		u(\bx)&=0&&\text{for all }\bx\in\partial\Omega.
	\end{aligned}
\end{equation}
We choose the $H^1_0(\Omega)$-norm on $V$. The coercivity constant of the corresponding bilinear form for a parameter $q$ is explicitly given as $\coercivityconstant_q=1$ and the initial Tikhonov regularization for all methods was chosen as $\alpha_0 = 1$. Moreover, we choose the same exact parameter $q^\mathsf e$ (see \cref{fig: kirchner reaction pictures}) as in \cite{diss_kirchner}, but shifted by the background $q_\circ$, i.e.
\begin{equation*}
	q^\mathsf e= q_\circ + q_1^\mathsf e + q_2^\mathsf e,
\end{equation*}
\begin{figure}[ht!]
	\centering
    \includegraphics[scale=1]{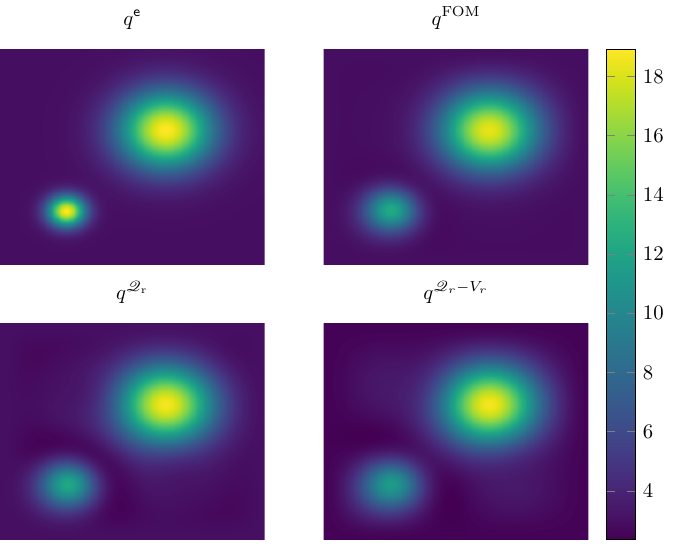}
	\caption[a]{Run~1: The exact parameter $q^\mathsf e$ and its
		three reconstructions $q^\mathrm{FOM}$, $q^\mathrm{\Q_r}$ and $q^{\Q_r\text{-}V_r}$} 
	\label{fig: kirchner reaction pictures}
\end{figure}
where the two Gaussian distributions $q_1^\mathsf e$, $q_1^\mathsf e$ are given for $x\in \Omega$ as
\begin{align*}
	q_1^\mathsf e (\bx) &=\frac{1}{2\pi\sigma^2}\exp\Bigg(  -\frac{1}{2}\Bigg( \bigg(\frac{2x_1-0.5}{0.1}\bigg)^2+ \bigg(\frac{2x_2-0.5}{0.1}\bigg)^2\Bigg)\Bigg),\\
	q_2^\mathsf e (\bx)& =\frac{1}{2\pi\sigma^2}\exp\Bigg(  -\frac{1}{2}\Bigg( \bigg(\frac{0.8x_1-0.5}{0.1}\bigg)^2+ \bigg(\frac{0.8x_2-0.5}{0.1}\bigg)^2\Bigg)\Bigg).
\end{align*}
A comparison of the algorithms in terms of computation time, FOM PDE solves and FOM $\mathcal B_u/\mathcal B_u'$ applications, reduced basis sizes, and iterations needed to reach the discrepancy threshold $\tau\delta$ is given in \cref{fig: kirchner reaction table}.
\begin{table}[ht] 
	\centering 
	\begin{tabular}{lcccccc}\toprule
		Algorithm & time [s]  & FOM solves & FOM $\mathcal B_u/\mathcal B_u'$ &$n_\Q$ & $n_V$ &  o. iter \\ \midrule
		\FOM  & 51  & \phantom{1}888 & 842 & -- &  -- & 22\\
		\QFOM  &  67 & 1131 & \phantom{1}13 &    14 & --&  12\\ 
		\QTRRB &  14 & 24(+124) & \phantom{11}8 & \phantom{1}9 & 16 &  \phantom{1}7\\ \bottomrule
	\end{tabular}
	\caption{Run~1: Comparison of the performance of all algorithms. For the \QTRRB$\ $algorithm, the number of FOM solves needed for the error estimator is in brackets.}
	\label{fig: kirchner reaction table}
\end{table}
We observe the impact of the reduction of the parameter space from the FOM $\mathcal B_u/\mathcal B_u'$ applications in the \FOM$\ $algorithm compared to the methods using the parameter space reduction, which reduced the number of FOM $\mathcal B_u/\mathcal B_u'$ applications by a factor of $100$. Additionally, the impact of the state space reduction is well visible, since the number of FOM PDE solves is significantly reduced by the \QTRRB$\ $algorithm compared to the \FOM$\ $algorithm. Also, from \cref{fig: kirchner reaction table} we deduce that the verification of the reduced order model of the error estimator is the most expensive part of the \QTRRB$\ $algorithm since 124 of the total 148 FOM solves are needed to certify the reduced order model in this case. The \QFOM$\ $algorithm is slower than the FOM method, while the \QTRRB$\ $algorithm gives a speed-up of about $3.5$. This is also depicted in the left plot in \cref{fig: kirchner reaction plots}, where the discrepancy is plotted against the CPU time.
We conclude that the parameter-reduced algorithms can reconstruct the parameter up to the discrepancy threshold by only a small reduced parameter basis of size $n_\Q=9$ ($n_\Q=14$). The gradient norm at the reconstructed parameters is of the order of $10^{-9}$. The reconstructed parameters are depicted in \cref{fig: kirchner reaction pictures}.
\begin{figure}[ht!]
	\centering
    \includegraphics[scale=1]{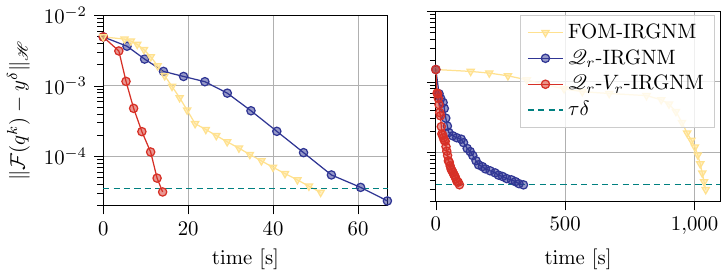}
	\caption[a]{The residuals $\|\F(q^k)-\yd\|_\H$ per computation time needed for iteration $k$ for Run~1 (left plot) and for Run~2 (right plot)} 
	\label{fig: kirchner reaction plots}
\end{figure}
All methods capture the most important characteristics of $ q^\mathsf e$, while the reconstruction of the FOM has the best quality. The relative errors are very small and given as
\begin{align*}
	\frac{\|q^\mathrm{FOM}-q^{\Q_r}\|_\Q}{\|q^\mathrm{FOM}\|_\Q}\approx 2\,\% \quad\text{and}\quad \frac{\|q^\mathrm{FOM}-q^{\Q_r\text{-}V_r}\|_\Q}{\|q^\mathrm{FOM}\|_\Q}\approx 6\,\%.
\end{align*}
\ReA{Further, we illustrate the pointwise relative errors of the reduced methods w.r.t. to the FOM in \cref{fig: error reaction 1 problem pictures}. While the approximation in the interior of the domain is lower than $10\%$, one also observes that the reconstruction of the \QTRRB$\ $at the boundary of the domain is worse than the one for the \QFOM$\ $. It is important to highlight that the parameter-reduced methods seek parameters within the
reduced parameter space that optimally fit with the data $\yd$ concerning the discrepancy principle. Nevertheless, the discrepancy principle exclusively guarantees the approximation quality only within the image of the forward operator $\F$, where the boundary information is not taken into account due to the homogeneous Dirichlet conditions. In that manner the boundary may be not well-approximated, if the reduced parameter space is of low dimension, which is the case here (cf. \cref{fig: kirchner reaction table}).}
\begin{figure}[ht]
	\hspace{0.8cm}\includegraphics[scale=1]{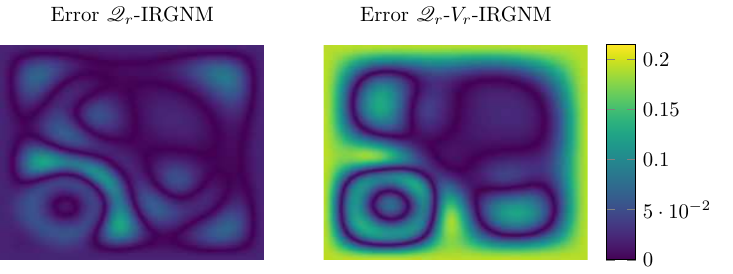}
    \caption[a]{Run~1: pointwise relative errors w.r.t. to the FOM reconstruction}
	\label{fig: error reaction 1 problem pictures}
\end{figure}
%
\subsection{Run~2: Reconstruction of the diffusion coefficient -- Test~1}\label{subsec: reconstr. diffusion coeff}
%
We consider the situation in Example~\ref{ex: example_problems} (ii), that is we consider the parametrized problem
\begin{equation}
	\label{eq: diffusion strong 2}
	\begin{aligned}
		-\nabla\cdot\big(q(\bx) \nabla u(\bx)\big) &=f(\bx)&&\text{for all }\bx\in\Omega,\\
		u(\bx)&=0&&\text{for all }\bx\in\Omega.
	\end{aligned}
\end{equation}
We choose the $H^1_0(\Omega)$-norm on $V$, which results in a coercivity constant of $\underline\alpha_q=\essinf_\Omega q$. We use the same model problem as in \cite{RBL}, where the exact parameter is given as $q^\mathsf e=q_0+\ccont\chi_{\Omega_1}-2\chi_{\Omega_2}$ (see \cref{fig: P1H1 Landweber problem pictures}) with 
\begin{align*}
	\Omega_1&=[5/30,9/30]\times [3/30,27/30] \nonumber \\
	& \quad\cup \big([9/30, 27/30]\times \big([3/30,7/30]\cup [23/30,27/30]\big) \big),\\
	\Omega_2&= \big\{x\in \Omega\,|\, \|x-(18/30, 15/30)^T \|< 4/30 \big\},
\end{align*}
and contrast parameter $\ccont=2$. 
\begin{figure}[ht]
	\centering
	\includegraphics[scale=1]{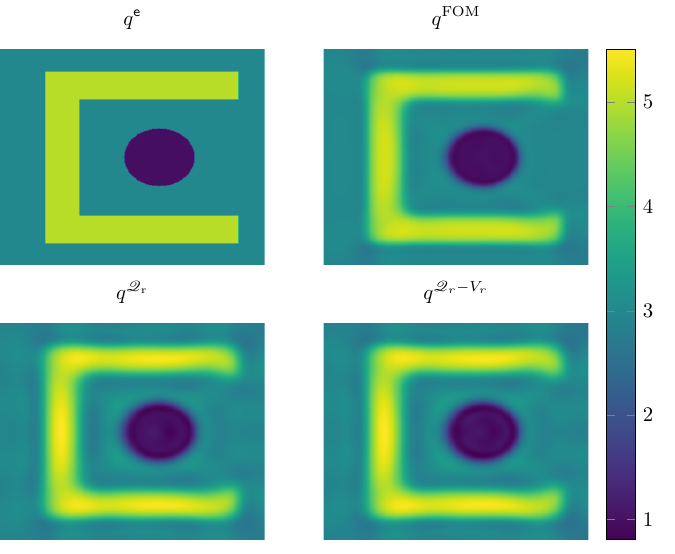}
    \caption[a]{Run~2: the exact parameter $q^\mathsf e$ and its three reconstructions $q^\mathrm{FOM}$, $q^\mathrm{\Q_r}$ and $q^{\Q_r\text{-}V_r}$}
	\label{fig: P1H1 Landweber problem pictures}
\end{figure}
In our numerical experiments, we observe that regularizing with the $L^2(\Omega)$-norm resulted in small artifacts of high amplitude on some mesh cells for the \FOM$\ $algorithm. This relates also to the fact that from a theoretical point of view, the canonical Hilbert space for $\Q$ is $H^2(\Omega)$ (see \cref{ex: example_problems} (ii)), and the canonical Banach space is $W^{1,4}(\Omega)$ (see \cite{Kaltenbacher_2009}). Since we neither want to use FE subspaces of $H^2(\Omega)$ nor work in a Banach space setting, we simply choose $\Q=H^1(\Omega)$ equipped with its canonical norm (cf. \cref{rem: remark unconstrained}). The initial Tikhonov regularization for all methods was chosen as $\alpha_0 = 10^{-3}$. The enrichment of the parameter space is done as follows. The discrete gradient of the regularized cost function $\hat J_{\alpha}$ for $\alpha >0$ is given as
\begin{equation}\label{eq:H1 gradcont discrete}
	\nabla\hat J(q) = \alpha S(q-q_\circ) + B_u^Tp\in \R^{N_\Q},
\end{equation}
where $S\in \R^{N_\Q\times N_\Q}$ is the discretized Neumann operator and $B_u^T\in \R^{N_\Q\times N_\Q}$ the discretization of the operator $\mathcal B_u'$. To improve the quality of the reconstruction for the parameter-reduced methods, we perform a smoothing of the gradient by choosing the snapshot to enrich the reduced parameter space as
\begin{equation}\label{eq: snapshot choice diffusion}
	q = S^{-1} B_u^Tp,
\end{equation}
which corresponds to the optimize-then-discretize gradient $J_\Q^{-1}\mathcal B_u'p$ from \eqref{eq: gradient} and the optimality condition \eqref{eq: optimality system subproblem}. Note that also \eqref{eq: snapshot choice diffusion} comes from setting \eqref{eq:H1 gradcont discrete} equal zero, solving for $q$ and neglecting the scaling of $\alpha$ and the term $q_\circ$, which is already in the basis.\\
A comparison of the algorithms is given in \cref{fig: P1H1 Landweber problem table} and in the right plot in \cref{fig: kirchner reaction plots}. The forward FOM solves are reduced by a factor of $4$ and $60$ for the \QFOM$\ $and the \QTRRB, respectively. Correspondingly, in our numerical experiments we observe a speedup in computation time of about 3 and 10 for the \QFOM$\ $ and the \QTRRB, respectively. 
\begin{table}[ht] 
	\centering 
	\begin{tabular}{lcccccc}\toprule
		Algorithm & time [s]  & FOM solves & FOM $\mathcal B_u/\mathcal B_u'$ &$n_\Q$ & $n_V$ &  o. iter \\ \midrule
		\FOM  & 1043  &35978 & 35934 & --\hspace{1.2mm} &  --\hspace{1.2mm} & 21\\
		\QFOM &  \phantom{1}338 & \phantom{1}8162 & \phantom{111}28 &    29 & --\hspace{1.2mm} &  27\\ 
		\QTRRB &  \phantom{11}91 & 86(+436) & \phantom{111}28 & 29  & 56 &  27\\ \bottomrule
	\end{tabular}
	\caption{Run~2: comparison of the performance of all algorithms. For the \QTRRB$\ $algorithm, the number of FOM solves needed for the error estimator is in brackets.}
	\label{fig: P1H1 Landweber problem table}
\end{table}

As in the first example, we observe in \cref{fig: P1H1 Landweber problem table} that the most expensive part of the \QTRRB$\ $ algorithm is the certification by the error estimator. The FOM gradients at the reconstructed parameter are of order $10^{-9}$. Further, the parameter-reduced methods obtain a low-dimensional representation of the optimal parameter by only using $29$ basis functions. The reconstructed parameters are depicted in \cref{fig: P1H1 Landweber problem pictures}.
The relative $L^2(\Omega)$ error norms are
\begin{align*}
	\frac{\|q^\mathrm{FOM}-q^{\Q_r}\|_{L^2(\Omega)}}{\|q^\mathrm{FOM}\|_{L^2(\Omega)}}\approx 2\,\% \quad\text{and}\quad \frac{\|q^\mathrm{FOM}-q^{\Q_r\text{-}V_r}\|_{L^2(\Omega)}}{\|q^\mathrm{FOM}\|_{L^2(\Omega)}}\approx2\,\%
\end{align*}
and the relative $H^1(\Omega)$ error norms are given as
\begin{align*}
	\frac{\|q^\mathrm{FOM}-q^{\Q_r}\|_{\Q}}{\|q^\mathrm{FOM}\|_{\Q}} \approx 20\,\%\quad\text{and}\quad \frac{\|q^\mathrm{FOM}-q^{\Q_r\text{-}V_r}\|_{\Q}}{\|q^\mathrm{FOM}\|_{\Q}}\approx 20\,\%.
\end{align*}
\ReA{The pointwise relative errors of the reduced methods w.r.t. to the FOM are depicted in \cref{fig: error diffusion 1 problem pictures}. The error is small in most regions and the maximum relative error peak for both methods is about $15\%$ which is quite good as $\Q\not\subset L^\infty(\Omega)$.}
\begin{figure}[ht]
	\hspace{0.8cm}\includegraphics[scale=1]{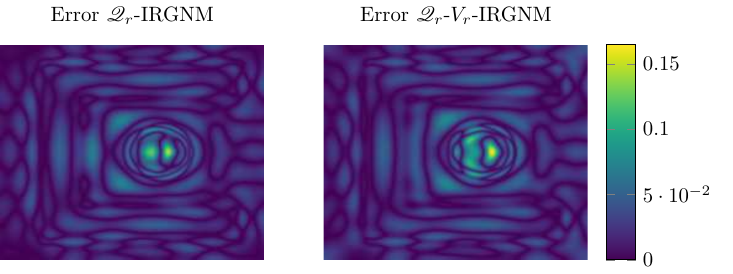}
    \caption[a]{Run~2: pointwise relative errors w.r.t. to the FOM reconstruction}
	\label{fig: error diffusion 1 problem pictures}
\end{figure}
%
\subsection{\ReA{Run~3: Reconstruction of the reaction coefficient -- Test~2}}\label{subsec: reconstr. reaction coeff2}
\ReA{
In this section, we consider another example for the reconstruction of the reaction coefficient in the setting of Section \ref{subsec. reconstr. reaction coefficient}. The exact parameter $q^\mathsf e$ contains differently regular parts: a Gaussian function, a hat function, and two discontinuous obstacles with local support on the background $q_0$ as shown in \cref{fig: error reaction 2 problem pictures}. Again, all methods capture the dominant characteristics of $q^\mathsf e$, and the reconstructions are depicted in \cref{fig: reaction 2 problem pictures}. The performance of the algorithms is depicted in \cref{fig: other reaction table} and in \cref{fig: computation time run 3 4} on the left. In this case, one observes a speedup of about $3$ to $4$ for the \QTRRB. The FOM gradient norms for all reconstructions were of the order of $10^{-9}$ and the $L^2(\Omega)$ errors are
\begin{align*}
	\frac{\|q^\mathrm{FOM}-q^{\Q_r}\|_\Q}{\|q^\mathrm{FOM}\|_\Q}\approx 10\,\% \quad\text{and}\quad \frac{\|q^\mathrm{FOM}-q^{\Q_r\text{-}V_r}\|_\Q}{\|q^\mathrm{FOM}\|_\Q}\approx 5\,\%.
\end{align*}
The pointwise relative errors w.r.t. the FOM are depicted in \cref{fig: error reaction 2 problem pictures}, which are in most regions significantly lower than $20\%$. In this case, the \QTRRB$\ $ admits a good approximation on the whole domain with a maximum relative error of $11\%$, but the \QFOM$\ $ struggles in approximating the boundary region, due to insufficient parameter snapshots, which can be explained as in Section \ref{subsec. reconstr. reaction coefficient}.
}
\begin{figure}[ht]
	\centering
	\includegraphics[scale=1]{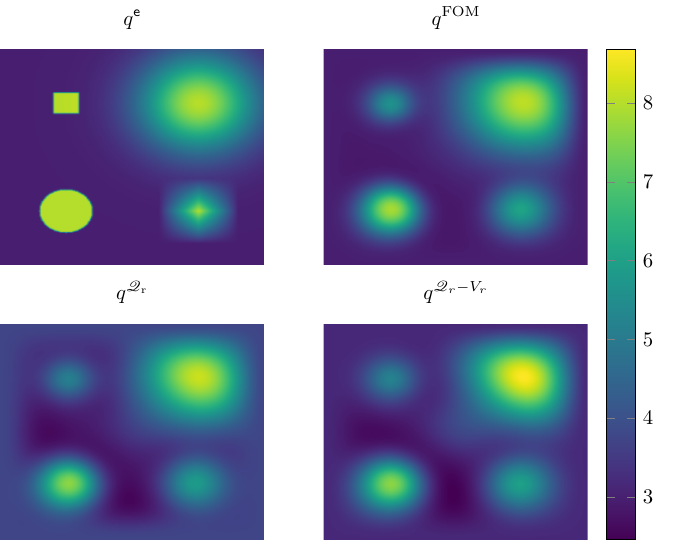}
    \caption[a]{Run~3: the exact parameter $q^\mathsf e$ and its three reconstructions $q^\mathrm{FOM}$, $q^\mathrm{\Q_r}$ and $q^{\Q_r\text{-}V_r}$}
	\label{fig: reaction 2 problem pictures}
\end{figure}
\begin{table}[ht] 
	\centering 
	\begin{tabular}{lcccccc}\toprule
		Algorithm & time [s]  & FOM solves & FOM $\mathcal B_u/\mathcal B_u'$ &$n_\Q$ & $n_V$ &  o. iter \\ \midrule
		\FOM  & 51  & \phantom{1}425 & 393 & -- &  -- & 15\\
		\QFOM  &  67 & 979 & \phantom{1}12 &    13 & --&  11\\ 
		\QTRRB &  14 & 24(+122) & \phantom{11}8 & \phantom{1}9 & 16 &  \phantom{1}7\\ \bottomrule
	\end{tabular}
	\caption{Run~3: Comparison of the performance of all algorithms. For the \QTRRB$\ $algorithm, the number of FOM solves needed for the error estimator is in brackets.}
	\label{fig: other reaction table}
\end{table}
\begin{figure}[ht]
	\hspace{0.8cm}\includegraphics[scale=1]{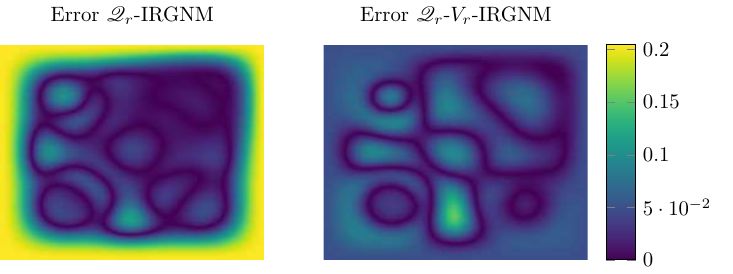}
    \caption[a]{Run~3: pointwise relative errors w.r.t. to the FOM reconstruction}
	\label{fig: error reaction 2 problem pictures}
\end{figure}
\begin{figure}[ht]
	\centering
	\includegraphics[scale=1]{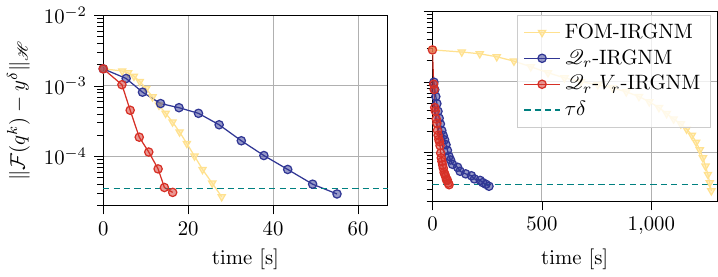}
    \caption[a]{The residuals $\|\F(q^k)-\yd\|_\H$ per computation time needed for iteration $k$ for Run~3 (left plot) and for Run~4 (right plot)} 
	\label{fig: computation time run 3 4}
\end{figure}
%
\subsection{\ReA{Run~4: Reconstruction of the diffusion coefficient -- Test~2}}\label{subsec: reconstr. diffusion coeff2}
\ReA{
We also consider another instance of the situation in Section \ref{subsec: reconstr. diffusion coeff}. The exact parameter is chosen as in the previous Section \ref{subsec: reconstr. reaction coeff2} and is depicted in \cref{fig: diffusion 2 problem pictures} with its three reconstructions that encapsulate key characteristics of the exact parameter. The performance of the algorithms is depicted in \cref{fig: other diffusion table} and in \cref{fig: computation time run 3 4} on the right, where we see a speedup of about $10$ ($3$) for the \QTRRB$\ $(\QFOM). The gradient norms for all reconstructions were of order $10^{-9}$ and the relative $L^2(\Omega)$ error norms are given as
\begin{align*}
	\frac{\|q^\mathrm{FOM}-q^{\Q_r}\|_{L^2(\Omega)}}{\|q^\mathrm{FOM}\|_{L^2(\Omega)}}\approx 2\,\% \quad\text{and}\quad \frac{\|q^\mathrm{FOM}-q^{\Q_r\text{-}V_r}\|_{L^2(\Omega)}}{\|q^\mathrm{FOM}\|_{L^2(\Omega)}}\approx2\,\%.
\end{align*}
The relative $H^1(\Omega)$ error norms are
\begin{align*}
	\frac{\|q^\mathrm{FOM}-q^{\Q_r}\|_{\Q}}{\|q^\mathrm{FOM}\|_{\Q}} \approx 18\,\%\quad\text{and}\quad \frac{\|q^\mathrm{FOM}-q^{\Q_r\text{-}V_r}\|_{\Q}}{\|q^\mathrm{FOM}\|_{\Q}}\approx 20\,\%.
\end{align*}
The pointwise relative errors of the reduced methods w.r.t. the FOM are depicted in \cref{fig: error diffusion 2 problem pictures} are lower than $12\%$.
}
\begin{figure}[ht]
	\centering
	\includegraphics[scale=1]{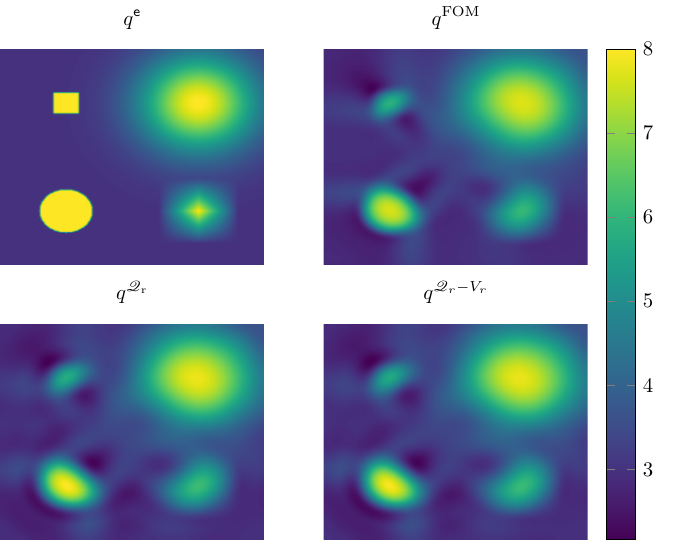}
    \caption[a]{Run~4: the exact parameter $q^\mathsf e$ and its three reconstructions $q^\mathrm{FOM}$, $q^\mathrm{\Q_r}$ and $q^{\Q_r\text{-}V_r}$}
	\label{fig: diffusion 2 problem pictures}
\end{figure}
\begin{table}[ht] 
	\centering 
	\begin{tabular}{lcccccc}\toprule
		Algorithm & time [s]  & FOM solves & FOM $\mathcal B_u/\mathcal B_u'$ &$n_\Q$ & $n_V$ &  o. iter \\ \midrule
		\FOM  &  1043 & 48723 & 48669 & -- &  -- & 26\\
		\QFOM  &  \phantom{1}333 & \phantom{1}6623 &\phantom{111}25 &    26 & --&  24\\ 
		\QTRRB &  \phantom{11}89 & 73(+314) & \phantom{111}24 & 25 & 48 & 23\\ \bottomrule
	\end{tabular}
	\caption{Run~4: Comparison of the performance of all algorithms. For the \QTRRB$\ $algorithm, the number of FOM solves needed for the error estimator is in brackets.}
	\label{fig: other diffusion table}
\end{table}
\begin{figure}[ht]
	\hspace{0.8cm}\includegraphics[scale=1]{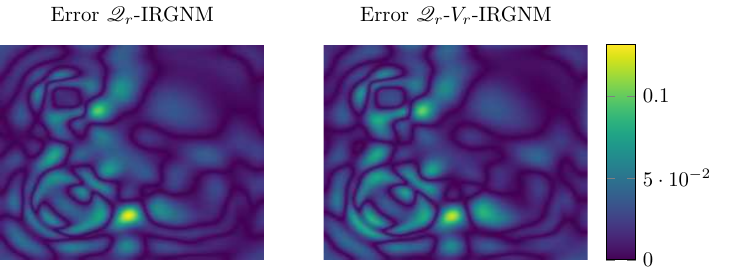}
    \caption[a]{Run~4: pointwise relative errors w.r.t. to the FOM reconstruction}
	\label{fig: error diffusion 2 problem pictures}
\end{figure}
%
\subsection{\ReA{Case study of the error estimator}}\label{subsec: error est case study}
%
\ReA{
In this subsection, we discuss condition \eqref{eq: error est assembly condition} used in the  \QTRRB by considering the situation in Section~\ref{subsec: reconstr. diffusion coeff} for an initial radius of $\delta^{(0)}=10$. We compare the following different strategies for evaluating the error estimator $\Delta_{\hat J}$, i.e., computing the residual norms in \eqref{eq: error estimator}:
\begin{enumerate}
    \item offline/online decomposition of the residual: every time the RB model is updated we compute the Riesz representatives corresponding to the new affine components of the bilinear form $a$,
    \item online evaluation of the residual norms: the residual norms are computed online by solving a linear system with complexity depending on the dimension of the FOM,
    \item mixed strategy using condition \eqref{eq: error est assembly condition}.
\end{enumerate}
The results are depicted in \cref{fig: error est evaluation figure} and \cref{fig: error est evaluation table}.
At the beginning of the iteration, both reduced bases are of small dimensions, which means the error estimator can be cheaply constructed in an offline/online manner. Additionally, the radius of the trust region is set adaptively during the first iterations, which means that the error estimator is often used during the Armijo backtracking (see the quantity $K_{\mbox{\tiny online}}^{k}$ in the first iterations on the right of \cref{fig: error est evaluation figure}). As the reduced bases grow due to unconditional enrichment, the cost of updating the error estimator progressively increases throughout the iteration (see the quantity $K_{\mbox{\tiny ass}}^{k}$ on the right in \cref{fig: error est evaluation figure}). Conversely, the ROM becomes increasingly accurate, leading to a diminished reliance on the error estimator. In the present example, condition \eqref{eq: error est assembly condition} switches at iteration $8$ from offline/online assembly to online computation of the error estimator and therefore prevents the case of expensively creating an error estimator that is not used at profitably many occasions. As a result, the mixed strategy follows the path of using minimal FOM solves for error estimation as can be seen in \cref{fig: error est evaluation figure} on the right and in \cref{fig: error est evaluation table}, which also is reflected in the computation times.
}
\begin{figure}[ht]
	\centering
    \includegraphics{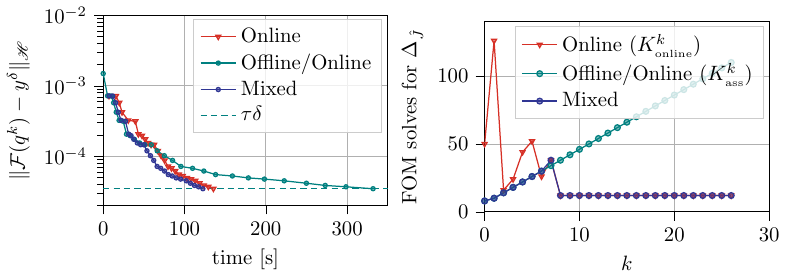}
    \caption[a]{Performance of the \QTRRB$\ $for different evaluation strategies of $\Delta_{\hat J}$. Left: discrepancy per computation time. Right: FOM solves for error estimation per outer iteration.\hfill\\}
	\label{fig: error est evaluation figure}
\end{figure}
\begin{table}[ht] 
	\centering 
	\begin{tabular}{lcc}\toprule
		Evaluation of $\Delta_{\hat{ J}}$ & time [s]  & FOM solves for $\Delta_{\hat{ J}}$  \\ \midrule
		Offline/online & 331  & 1682 \\
		Online  &  136 & 604 \\ 
		Mixed strategy using \eqref{eq: error est assembly condition} &  127 & 394 \\ \bottomrule
	\end{tabular}
	\caption{FOM solves for error estimation and computation time of the \QTRRB$\mbox{ }$ for different evaluation strategies of $\Delta_{\hat{ J}}$}
	\label{fig: error est evaluation table}
\end{table}
%
\section{Conclusion}
\label{Sec:5}

We introduced a new adaptive parameter and state reduction IRGNM for the solution of parameter-identification problems. The reduced parameter space is enriched with gradients, which are available anyway, if the reduced state space is enriched using both, the primal and dual solutions. The low dimensionality of the reduced parameter spaces allows us to efficiently build and certify a state-space RB model on the reduced parameter space. This enables to deal with high or even infinite-dimensional parameter spaces for an RB approximation. Numerical experiments with parameter spaces of the size of the FE discretization show the efficiency of the proposed approach for inverse parameter identification problems with distributed reaction or diffusion coefficients.

%
%
%
%
%
%
%
%
\section*{Declarations}
%
%
\begin{itemize}
\item {\bf Funding.} 
The authors acknowledge funding by the Deutsche Forschungsgemeinschaft for the project {\em Localized Reduced Basis Methods for PDE-Constrained Parameter Optimization}
under contracts OH 98/11-1; VO 1658/6-1. T. Keil and M. Ohlberger 
acknowledge funding by the Deutsche Forschungsgemeinschaft under Germany’s Excellence Strategy EXC 2044 390685587, Mathematics M\"unster: Dynamics -- Geometry -- Structure.
\item {\bf Competing Interests.}
The authors declare that they have no conflict of interest. 
\item {\bf Code availability.} The source code to reproduce the results of this article is available at \cite{source_code}.
\item {\bf Data availability.}
Data sharing is not applicable to this article as no datasets were generated or analyzed during the current study.
\item {\bf Authors' contributions.}
M.K. and T.K. implemented the code. M.K., T.K., M.O., and S.V. wrote the main manuscript text. All authors reviewed and approved the final manuscript.

\end{itemize}

\bibliography{biblio}

\end{document}